%%%%%%%%%%TopSlicePoly1

\documentclass[10pt]{amsart}
\usepackage{amssymb, amscd, amsmath, amsthm, epsf, epsfig, latexsym}
\usepackage{pb-diagram, fancyhdr, graphicx, psfrag}

\newcommand{\compactlist}{\begin{list}{$\bullet$}{\setlength{\leftmargin}{1em}}}

\def\co{\colon\thinspace}
\def\cs{\mathop{\#}}
\def\calr{\mathcal{R}}

 \def\calc{\mathcal{C}}

\newcommand{\fig}[2] { \includegraphics[scale=#1]{#2} }

%%%%%%%%%%%%%%% Matt's macros = mac.tex  %%%%%%%%%%%%%%%%%%%%%

\hyphenation{ho-mol-o-gous}

\newcommand{\Cdoub}{\CFKinf(D(T_{2,3}))}
\newcommand{\doub}{D(T_{2,3})}
\newcommand{\HF}{HF}
\newtheorem{thm}{Theorem}
\def\bull{\vbox{\hrule\hbox{\vrule\kern3pt\vbox{\kern6pt}\kern3pt\vrule}\hrule}}

\newcommand{\cm}{\cdot}
%stuff from macinv.tex

\newcommand{\F}{\mathbb F}
\newcommand{\Z}{\mathbb Z}
\newcommand{\Q}{\mathbb Q}

\newcommand\relspinc{\underline{\spinc}}

\newcommand\Filt{\mathcal F}

\newcommand\x{\mathbf x}

\newcommand\y{\mathbf y}

\newcommand\ModFlow{\mathcal M}

\newcommand\ModSphere{\ModFlow\left({\mathbb S}\longrightarrow 
\Sym^{g-1}(\Sigma_{1})\times \Sym^2(\Sigma_{2})\right)}
\newcommand\ModSpheres\ModSphere
\newcommand\CF{CF}

\newcommand\CFa{\widehat{CF}}
\newcommand\CFp{\CFb}
\newcommand\CFm{\CF^-}

\newcommand\HFp{\HFb}

\newcommand\HFm{\HF^-}
\newcommand\CFinf{CF^\infty}
\newcommand\HFinf{HF^\infty}
\newcommand\CFb{CF^+}
\newcommand\HFa{\widehat{HF}}
\newcommand\HFb{HF^+}
\newcommand\gr{\mathrm{gr}}

\newcommand\UnparModSp{\widehat \ModSp}
\newcommand\UnparModFlow\UnparModSp
\newcommand\Mod\ModSp

\newcommand{\spinc}{\ifmmode{{\mathfrak s}}\else{${\mathfrak s}$\ }\fi}

\newcommand{\spinct}{\mathfrak t}

\newcommand\ModMaps{\mathcal M}
\newcommand\ModSp\ModMaps

\newcommand\Ta{{\mathbb T}_{\alpha}}

\newcommand\Tb{{\mathbb T}_{\beta}}

%stuff from macnew.tex

\newcommand\spincrel\relspinc

\newcommand\CFK{CFK}
\newcommand\HFK{HFK}

\newcommand\CFKinf{\CFK^{\infty}}

\newcommand\HFKa{\widehat\HFK}

%stuff from macf.tex

\newcommand\SpinC{\mathrm{Spin}^c}

\newcommand\Dual{\mathcal D}
\newcommand\Duality\Dual

\newcommand\ons{Ozsv{\'a}th and Szab{\'o}}
\newcommand\os{Ozsv{\'a}th-Szab{\'o}}

%%%%%%%%%%%%%%%  end of mac.tex  %%%%%%%%%%%%%%%%%%%%%

%%%%%%%%%%%%%%%%%%%%%%%%%%%%%%%%%%%%%%%%%%%%%%%%%%%%%   
%THEOREM DEFNS ETC
%\include{mac}
\newtheorem{theorem}{Theorem}[section]

\newtheorem{lemma}[theorem]{Lemma}
\newtheorem{corollary}[theorem]{Corollary}
\newtheorem{prop}[theorem]{Proposition}
\theoremstyle{definition}
\newtheorem{definition}[theorem]{Definition}
\def\co{\colon\thinspace}

\numberwithin{equation}{section}

%%%%%%%%%%%%%%% DOCUMENT %%%%%%%%%%%%%%%%%%%%%%%%%%%

\begin{document}
\title{Topologically Slice Knots with Nontrivial Alexander Polynomial}
 \author{Matthew Hedden}\author{Charles Livingston}\author{Daniel Ruberman}
\thanks{This work was supported in part by the National Science Foundation under Grants 0706979,  0906258, 0707078, 1007196, and 0804760.\\ \today}
 
\address{Matthew Hedden: Department of Mathematics, Michigan State University, East Lansing, MI 48824 }
\email{mhedden@math.msu.edu}
\address{Charles Livingston: Department of Mathematics, Indiana University, Bloomington, IN 47405 }
\email{livingst@indiana.edu}
\address{Daniel Ruberman: Department of Mathematics, MS 050, Brandeis University, Waltham, MA 02454}
\email{ruberman@brandeis.edu}

%%%%%%%ABSTRACT%%%%%%%%%%%%%%

 \begin{abstract} Let  $ \calc_T$ be the subgroup of the smooth knot concordance group generated by topologically slice knots and let  $\calc_\Delta$ be the subgroup generated by knots with trivial Alexander polynomial.  We prove  $\calc_T / \calc_\Delta$ is infinitely generated.  Our methods reveal a similar structure in the $3$--dimensional rational spin bordism group, and lead to the construction of links that are topologically, but not smoothly, concordant to boundary links. \end{abstract}

\maketitle
 %%%%%%%SECTION%%%%%%%%%%%%%%
\section{Introduction}
  Donaldson's landmark theorem~\cite{d} has an immediate corollary (first observed by Akbulut and Casson, and appearing in~\cite{cochrangompf}) that there are classical knots with trivial Alexander polynomial that are not smoothly slice.  A year after Donaldson's work, Freedman~\cite{freedman:non-simply-connected,freedman-quinn} proved a $4$--dimensional topological surgery theorem for manifolds with fundamental group $\Z$, implying that a knot with trivial Alexander polynomial is in fact topologically (locally flat) slice.  Thus the natural map from the $3$--dimensional smooth knot concordance group $\calc$ to the topological (locally flat) concordance group $\calc^{top}$ is not injective.    

This paper is concerned with the kernel of the map between the groups $\calc$ and $\calc^{top}$.     Let  $ \calc_T$ be the subgroup of $\calc$ consisting of topologically  slice knots and let  $\calc_\Delta \subset \calc_T$ be the subgroup generated by knots with trivial Alexander polynomial.  Using gauge theoretic methods of Furuta~\cite{furuta:cobordism} and Fintushel-Stern~\cite{fs:pseudo}, Endo~\cite{e} proved that $\calc_\Delta$ contains an infinitely generated subgroup.  Based on Freedman's work it seemed possible that $\calc_\Delta = \calc_T$, or in other words that every topologically slice knot is smoothly concordant to a knot with trivial Alexander polynomial.  This possibility was heightened by the observation by Cochran, Friedl, and Teichner~\cite{cochran-friedl-teichner:newslice}, that  many potential counterexamples developed by Friedl and Teichner in~\cite{ft,ftcorr}  are in fact concordant  to knots with trivial Alexander polynomial.  Our main result is that, to the contrary, there is a substantial gap between these two subgroups of 
$\calc$.
 \begin{thm}\label{infgen}   $\calc_T / \calc_\Delta$ contains an infinitely generated free subgroup.
 \end{thm}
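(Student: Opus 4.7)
The plan is to produce an infinite sequence of topologically slice knots $\{K_n\}_{n\geq 1}$ and concordance-invariant homomorphisms $\phi_{p_n}\co \calc \to \Q$ such that each $\phi_{p_n}$ vanishes on $\calc_\Delta$ and the matrix $\bigl(\phi_{p_n}(K_m)\bigr)$ is lower-triangular with nonzero diagonal, forcing the $K_n$ to be $\Z$-independent in $\calc_T/\calc_\Delta$.  The main construction issue is that Freedman's $\pi_1=\Z$ surgery theorem alone yields topologically slice knots only when $\Delta=1$; to get topologically slice knots with nontrivial Alexander polynomial I would use a satellite construction.  If $P$ is a pattern that is itself topologically slice in $S^1\times D^2$ rel boundary, and $J$ is a topologically slice companion (for instance $J=D(T_{2,3})$, whose topological slicing comes from $\Delta_J=1$), then the satellite $P(J)$ is topologically slice.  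Choosing the patterns carefully, one can arrange that $|\Delta_{K_n}(-1)|=p_n$ for a sequence of distinct primes.

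The homomorphisms $\phi_p$ are built from Ozsv\'ath--Szab\'o correction terms: for $\Delta_K(-1)\neq 0$ the double branched cover $\Sigma_2(K)$ is a $\Q$-homology sphere, and one sets
\[ \phi_p(K) \;=\; \sum_{\spinc\in H_p} d(\Sigma_2(K),\spinc), \]
summing over the $p$-primary part $H_p$ of the spin$^c$ torsor.  Additivity of $d$-invariants under connected sum and their invariance under $\Q$-homology cobordism make $\phi_p$ a well-defined homomorphism on $\calc$.  When $\Delta_K=1$ one has $|H_1(\Sigma_2(K);\Z)|=1$, so $\Sigma_2(K)$ is an integer homology sphere and the $p$-primary subgroup is trivial for every prime $p$; therefore $\phi_p$ vanishes on every generator of $\calc_\Delta$, and by additivity on all of $\calc_\Delta$.

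For linear independence, choose the primes $p_n$ so that $p_n$ does not divide $|\Delta_{K_m}(-1)|$ for $m<n$, which immediately forces $\phi_{p_n}(K_m)=0$ when $m<n$.  The nonvanishing $\phi_{p_n}(K_n)\neq 0$ is obtained from the Ozsv\'ath--Szab\'o relation between the knot Floer complex and the Heegaard Floer homology of the double branched cover: in the satellite case this expresses the required $d$-invariants in terms of $\Cdoub$ together with invariants of the patterns, which can be computed explicitly.  The main obstacle in the proof is this final computation, together with the construction of the satellite patterns: one must simultaneously ensure topological sliceness of the pattern in the solid torus (beyond Freedman's simplest hypothesis), enough control over the Alexander polynomial to realize the required primes, and enough control over the knot Floer complex of the branched cover to see that the diagonal $d$-invariants are nonzero.
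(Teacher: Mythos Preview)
Your outline has a genuine gap at its core: the function $\phi_p(K)=\sum_{\spinc\in H_p} d(\Sigma_2(K),\spinc)$ is \emph{not} a homomorphism on $\calc$.  Additivity of $d$ under connected sum holds only spin$^c$-structure by spin$^c$-structure; when you sum over the whole $p$-primary torsor the cross terms produce $|H_p(K_2)|\,\phi_p(K_1)+|H_p(K_1)|\,\phi_p(K_2)$, not $\phi_p(K_1)+\phi_p(K_2)$.  Averaging fixes additivity, but the deeper problem remains: $\phi_p$ does not vanish on smoothly slice knots.  If $K$ is slice, then $\Sigma_2(K)$ bounds a $\Q$-homology ball and $d$ vanishes only on a \emph{metabolizer} $M\subset H_1(\Sigma_2(K))$, not on all of $H_p$.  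So $\phi_p$ does not descend to $\calc$, and ``invariance under $\Q$-homology cobordism'' of such a sum is simply false.  A second, related gap: even granting a homomorphism, your vanishing argument on $\calc_\Delta$ is wrong as stated.  For $\Delta_K=1$ the group $H_p$ is trivial, so your sum reduces to $d(\Sigma_2(K),\spinc_0)$, and this $d$-invariant of an integer homology sphere is generally nonzero (e.g.\ for $\Sigma_2(D(T_{2,3}))$).

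The paper's approach confronts exactly these obstacles.  It does \emph{not} build a homomorphism; instead it works directly with the Casson--Gordon-style metabolizer condition.  The obstruction is $\bar d(Y,\spinc)=d(Y,\spinc)-d(Y,\spinc_0)$, which subtracts off the unknown $d$-invariant contributed by the homology-sphere summand $\Sigma_2(K')$ when $\Delta_{K'}=1$.  The conclusion is only that $\bar d$ vanishes on \emph{some} metabolizer, so to handle an arbitrary $\Z$-linear combination $nK_p$ one must analyze all metabolizers of $(\Z/p^2\Z)^n$; this is the content of the appendix and is where the hypothesis $p\equiv 3\pmod 4$ enters.  The knots themselves are not satellites with slice patterns but rather bound punctured Klein bottles with one band tied into a Whitehead double; this is what makes $\Sigma_2(K_p)$ a single Dehn surgery on $S^3$, enabling the surgery formula for $d$.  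Your satellite/homomorphism strategy, as written, does not get off the ground.
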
 
 
To put Theorem~\ref{infgen} in the context of known results on the structure of the knot concordance group, 
we recall Ê the following decomposition, parameterized by half-integers:
$$ \calc_\Delta \subset \calc_T \subsetÊ \cap\, \calc_{i} \subset \cdots \subset \calc_{1.5} \subset \calc_1Ê \subset
\calc_AÊ \subset \calc,$$
where $\calc_A$ is the concordance group of algebraically slice knots 
(that is, the kernel of Levine's classifying homomorphism~\cite{le}) and the $\calc_i$ 
are the terms of the Cochran-Orr-Teichner filtration~\cite{cot1}.

Early work on concordance demonstrated that $\calc$ is infinitely generated~\cite{fm, le,  mi3,  tris}.   The infinite generation of $\calc_A$ follows from the  work of Casson and
Gordon~\cite{cg}, as shown by Jiang~\cite{ji}.  The infinite generation of $\calc_2 / \calc_{2.5}$ was proved in~\cite{cot2} and the infinite generation of $\calc_i / C_{i +.5}$ for all $i$ was proved in~\cite{chl} (see also~\cite{cochran-teichner} for the existence of elements of infinite order in each of the quotients).
 Conjecturally, $\cap\,
\calc_n = \calc_T$,   but little progress has been made in proving this.  As mentioned above, in~\cite{e} it is shown
that $\calc_\Delta$ is infinitely generated.  Thus the non-triviality of the quotient $\calc_T/\calc_\Delta$, established in
this paper, is  at the heart of the distinction between the smooth and topological category.
\vskip.1in
 \noindent{\bf Summary.}  The proof of Theorem~\ref{infgen} has three parts: (1)  developing an obstruction to a knot being smoothly concordant to a knot with trivial Alexander polynomial; (2) constructing knots that offer potential examples; and (3) explicitly computing the obstructions.   
 
 The first part of the proof of Theorem~\ref{infgen} occupies sections~\ref{sectionspinc} and~\ref{sectiondandobstruct}.  Section~\ref{sectionspinc} reviews Spin$^c$ structures on $3$--manifolds.  Section~\ref{sectiondandobstruct}  describes  basic properties of the Heegaard Floer {\it correction term} $d(Y,\spinc )$, which is a rational number associated to a $3$--manifold $Y$ equipped with a Spin$^c$ structure $\spinc$.  This invariant, defined   by Ozsv\'ath and Szab\'o in~\cite{os2}, has been applied  to study knot concordance in~\cite{grs, jn, mo}). Letting $\Sigma(K)$ denote the $2$--fold branched cover of a knot $K$, we use these basic properties of $d$  to prove  that a related invariant, $\bar{d}(\Sigma(K),\spinc )$, provides an obstruction to $K$ being concordant to a knot with trivial Alexander polynomial.   {\em A priori},  demonstrating that $\calc_T / \calc_\Delta$ is infinitely generated requires knowledge of our obstruction for all linear combinations of knots in a proposed basis.  Thus, a key aspect of this part of the proof is a careful analysis of possible metabolizers for $(\Z/p^2\Z)^n$, which significantly reduces the amount of Floer theoretic computation necessary.

 The second part of the proof of Theorem~\ref{infgen} occupies Section~\ref{sectionknots}.  Here a family of knots $\{K_p\}$ is constructed and the covers $\Sigma(K_p)$ are described as surgery on knots in $S^3$.  Figure~\ref{figure1} illustrates one example, the knot $K_3$.  In that figure, the knot $J_3$ is the connected sum of two untwisted Whitehead doubles of the trefoil knot: $J_3 = D(T_{2,3}) \# D(T_{2,3})$.  For other values of $p$, all of which are selected to be primes congruent to 3 modulo 4, there are $p$ half-twists between the bands of the illustrated surface and $J_p$ consists of $(3p-1)/4$ copies of the untwisted double of the trefoil.  A relatively easy argument using Freedman's result shows that $K_p$ is topologically slice. An exercise in framed link descriptions of $3$--manifolds then shows that $\Sigma(K_p)$ can be described as surgery on a knot in $S^3$.  More precisely, $\Sigma(K_p) = S^3_{p^2}(L_{p})$; that is, by $p^2$ surgery on a knot $L_p \subset S^3$, where $L_p$ is the  connected sum of the $(p-1,p)$--torus knot with $(3p -1)/2$ untwisted doubles of the trefoil.

\begin{figure}[h]
\psfrag{j3}{$J_3$}
\fig{.5}{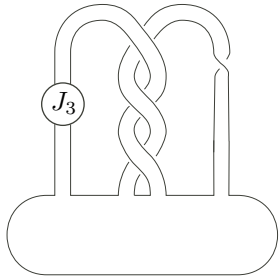}
\caption{A punctured Klein bottle whose boundary is the knot $K_3$.  The circle labeled $J_3$ indicates that we tie a knot $J_3$ into the band.    }\label{figure1}
\end{figure}
 
 The third part of the proof of Theorem~\ref{infgen} is the most technical, calling on precise estimates of the value of the invariants $d(\Sigma(K_p),\spinc )$ for appropriate Spin$^c$ structures.  In Section~\ref{sectionhf} we review the necessary background material from Heegaard Floer homology.  In Section~\ref{sectioncompute} we carry out the explicit estimates needed to obstruct the relevant knots from being concordant to knots with trivial Alexander polynomial, completing the proof of Theorem~\ref{infgen}.
 
 Section~\ref{cobordismsection}  applies the results of Section~\ref{sectionspinc} and the calculations of Section~\ref{sectioncompute} to study the structure of $3$--dimensional bordism groups.  We let $\Omega$ denote either the $3$--dimensional rational Spin--cobordism group, $\Omega^{\Q}_{\text{Spin}}$, or the  $3$--dimensional $\Z/2\Z$--homology bordism group $\Omega^{\Z/2\Z}$.  In each case, $\Omega$ contains a subgroup $\Omega_T$, defined to be the kernel of the homomorphism from $\Omega$ to the corresponding topological cobordism group.  It is also the case that $\Omega$  contains the subgroup $\Omega_I$ generated by homology spheres.  By Freedman~\cite{fr}, $\Omega_I \subset \Omega_T$, and by Furuta~\cite{furuta:cobordism}, $\Omega_I \subset \Omega^{\Z/2\Z}$ is infinitely generated.   Our techniques show that $\Omega_T / \Omega_I$ is infinitely generated.  Of course it follows  that  $ \Omega^{\Q}_{\text{Spin}}$ is infinitely generated, which apparently was not previously known.  As an additional consequence, we show the existence of 
$4$--dimensional rational homology balls that do not have pseudo-handlebody structures.
 
In Section~\ref{sectionboundary} we take up the question of boundary links and link concordance, again  with regard to the difference between smooth and topological cobordisms.  Cochran and Orr~\cite{cochran-orr:boundary-links} constructed $2$--component links in $S^3$ that are not concordant to boundary links, despite the fact that all of their Milnor $\bar{\mu}$--invariants vanish.  Generalizing observations made in~\cite{livingston:boundary-links}, we construct   further examples: in this case the links   have the remarkable property that they are topologically concordant to boundary links (and thus all Milnor and Cochran-Orr invariants vanish) and yet they are not smoothly concordant to boundary links.

 \vskip.1in
 \noindent{\bf Acknowledgments}  In addition to the support of the NSF, this collaboration was facilitated by generous support by our home institutions as well as the University of California, Berkeley, and MIT.  We also thank Tim Cochran, Stefan Friedl, Peter Horn, Rob Kirby,  Tom Mrowka, Nikolai Saveliev, and Peter Teichner for several useful and informative conversations. 

 %%%%%%%SECTION%%%%%%%%%%%%%%
 
\section{Spin$^c$  structures and metabolizers for linking forms}\label{sectionspinc}
 
Our concordance obstruction will come from an invariant of $\SpinC$ $3$--manifolds.   For these purposes, the only relevant $\SpinC$ structures  on a given $3$--manifold, $Y$, are those which could extend over a putative $4$--manifold $W$ satisfying $\partial W=Y$. In this section, we clarify this extension problem by connecting it with metabolizers for the linking form on $Y$.

Denote by $\SpinC(W)$ the set of Spin$^c$  structures on a manifold, $W$.  
If $\SpinC(W)$ is non-empty, then it admits a free, transitive action by $H^2(W;\Z)$, which we denote $z \cdot \spinc$ for 
$z\in H^2(W;\Z),\ \spinc \in \SpinC(W)$.  The action affects the first Chern class by the rule $c_1(z \cdot \spinc ) - c_1(\spinc ) = 2z$. 
Thus if $H^2(W;\Z)$ has no $2$--torsion, then the first Chern class establishes an injection 
$$c_1 \co \SpinC(W) \to H^2(W)$$ 
with image the set of classes that restrict to $w_2(W) \pmod{2}$.  
This map is natural with respect to restriction: if $Y \subset W$ then  $c_1(\spinc |_Y) = i^*c_1(\spinc )$, where $i^*$ is the restriction map on cohomology.  

Let us now make the further simplifying assumptions that $H_2(W; \Z / 2\Z) = 0$ and $H_1(\partial W; \Z / 2\Z) = 0$.  Under the Lefschetz and Poincar\'e duality isomorphisms $H^2(W) \cong H_2(W, \partial W)$ and $H^2(\partial W) \cong H_1(\partial W)$, the set of Spin$^c$ structures on $\partial W$ that arise as the restriction of Spin$^c$  structures on $W$ correspond to $$\mathrm{Im}( H_2(W, \partial W)\overset{\partial}{\rightarrow}H_1(\partial W)).$$  This is precisely the kernel of the map $i_*\co H_1(\partial W) \to H_1(W)$, induced by inclusion.  Understanding this  kernel can be aided significantly by an observation of Casson and Gordon~\cite{cg}.

To state their observation, recall that a subgroup  $M \subset H_1(Y)$ is called a {\it metabolizer} if \begin{itemize} \item $|M|^2 = |T_1(Y)|$, where $T_1$ denotes the torsion subgroup of $H_1(Y;\Z)$, and 
 \item The $\Q/\Z$--valued linking form on $H_1(Y;\Z)$ is identically zero on $M$.
 \end{itemize}
In these terms, we have the following ~\cite{cg}.
 
 \begin{prop}\label{prop:cassongordon} If $W$ is a compact $4$--manifold with $H_2(W;\Z/2\Z) = 0$ and $H_1(\partial W; \Z / 2\Z) = 0$,  then the set of Spin$^c$  structures on $\partial W$ that extend to Spin$^c$ structures on $W$ correspond via $c_1$ and Poincar\'e duality to a metabolizing subgroup $M \subset H_1(\partial W)$.
  \end{prop}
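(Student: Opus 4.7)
The plan is to leverage the bijection $c_1\colon \SpinC(\partial W) \hookrightarrow H^2(\partial W;\Z)$ from the discussion preceding the statement, in combination with Lefschetz-Poincar\'e duality and a classical half-lives-half-dies argument. First I identify the image, under $c_1$ and $\PD$, of those structures extending to $W$. The set $\SpinC(\partial W)_{\text{ext}}$ is the image of the restriction $\SpinC(W) \to \SpinC(\partial W)$; under $c_1$ it becomes an affine coset of $2 i^* H^2(W;\Z) \subseteq H^2(\partial W;\Z)$. Lefschetz duality $H^2(W;\Z) \cong H_2(W,\partial W;\Z)$ combined with Poincar\'e duality $H^2(\partial W;\Z) \cong H_1(\partial W;\Z)$ intertwine $i^*$ with the connecting homomorphism $\partial\colon H_2(W,\partial W;\Z) \to H_1(\partial W;\Z)$, whose image by exactness is $M := \ker(i_*\colon H_1(\partial W;\Z) \to H_1(W;\Z))$.

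Second, I show this affine coset is $M$ itself. The hypothesis $H_2(W;\Z/2\Z) = 0$ forces $H_2(W;\Z)$ to be finite of odd order, so by universal coefficients $H^2(W;\Z) \cong H_1(W;\Z)_{\mathrm{tor}}$ is finite; hence $M$ is finite. The hypothesis $H_1(\partial W;\Z/2\Z) = 0$ means $H_1(\partial W;\Z)$ has no $2$-torsion, so $M$ has odd order and $2M = M$. Since $\PD(c_1(\spinc_0))$ lies in $M$ for any extendable $\spinc_0$, the coset collapses to $M$.

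Third, I verify that $M$ is a metabolizer. For the vanishing of the linking form, given $a,b \in M$ with $na = 0$ in $H_1(\partial W;\Z)$, I pick a $2$-chain $c \subset \partial W$ with $\partial c = na$ together with $2$-chains $\tilde a, \tilde b \subset W$ with $\partial \tilde a = a$ and $\partial \tilde b = b$, perturbed to be transverse; the difference $c - n\tilde a$ is a relative $2$-cycle in $(W,\partial W)$ which bounds rationally since $H_2(W;\Q) = 0$, yielding $\lk(a,b) \equiv (\tilde a \cdot \tilde b)_W \pmod 1$, and the right-hand side is an integer. The order identity $|M|^2 = |T_1(\partial W)|$ is the classical half-lives-half-dies lemma, obtained by a counting argument in the long exact sequence of $(W,\partial W)$ together with Lefschetz duality and nondegeneracy of the linking form on $T_1(\partial W)$. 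The main obstacle is the chain-level comparison identifying the linking pairing on $\partial W$ with the intersection pairing in $W$; once that is in hand the remaining assertions are algebraic formalities flowing from the first two steps.
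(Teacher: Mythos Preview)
The paper does not give its own proof: the identification of extendable $\SpinC$ structures with $\ker(i_*\colon H_1(\partial W)\to H_1(W))$ is outlined in the paragraph preceding the statement, and the fact that this kernel is a metabolizer is attributed to Casson--Gordon~\cite{cg}. Your argument supplies both halves along the standard lines and is correct.

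One point in Step~3 is worth tightening. The phrase ``$c-n\tilde a$ is a relative $2$--cycle in $(W,\partial W)$ which bounds rationally since $H_2(W;\Q)=0$'' is garbled: as a relative chain $c\subset\partial W$ is already zero, and it would be $H_2(W,\partial W;\Q)$ that governs relative bounding in any case. What you want is the absolute statement: push $c$ slightly into the interior so that $c-n\tilde a$ is an absolute $2$--cycle in $W$, necessarily torsion since you have shown $H_2(W;\Z)$ is finite. Pairing this torsion class with $[\tilde b]\in H_2(W,\partial W;\Z)$ under the intersection form $H_2(W)\times H_2(W,\partial W)\to\Z$ gives zero, whence $c\cdot b = n(\tilde a\cdot\tilde b)$ and $\lk(a,b)=\tilde a\cdot\tilde b\in\Z$ as you claim.
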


In the special case that $Y$ is a $\Z/2\Z$--homology $3$--sphere\footnote{Throughout we will say that an $n$-manifold is an $R$--homology sphere (ball) if it has the same homology with $R$--coefficients as the $n$--sphere (ball).}, then it admits a unique spin structure; denote by $\spinc_0$ the Spin$^c$ structure corresponding to this spin structure.  This notation extends to the other $\SpinC$ structures as follows.

\begin{definition} For $Y$ a $\Z/2\Z$--homology sphere, and $m\in H_1(Y;\Z)$, let $\spinc_m$ be the unique Spin$^c$ structure satisfying $PD[c_1(\spinc_m)]=2m$. \end{definition}

\noindent{\bf Comments.}  In many cases, we will consider manifolds with $H_1(Y)$ cyclic.  In these cases we refer to $\SpinC$ structures by $\spinc_m$, with $m\in \Z/p\Z$.   The factor of two appears in our notation in order to simplify some of the exposition regarding the Heegaard Floer complexes that appear later.  Note, however, that for $\Z/2\Z$ homology spheres, multiplication by two is an isomorphism of $H_1(Y)$ that leaves all subgroups invariant.

 %%%%%%%SECTION%%%%%%%%%%%%%%
 
\section{The Heegaard Floer correction term $d$ as a concordance obstruction}\label{sectiondandobstruct}
 
 In this section, we define an invariant that serves as an obstruction for a knot to be concordant to a knot with Alexander polynomial one.  We then proceed to determine sufficient conditions, in terms of our obstruction, for a collection of knots to be linearly independent (over $\Z$) in the quotient group $\calc_T / \calc_\Delta$; see Theorem \ref{independence}.   In light of Proposition \ref{prop:cassongordon}, it is perhaps unsurprising that deriving these conditions requires some analysis of linking forms and their metabolizers.  We relegate the bulk of this algebra to the appendix.

To a $\SpinC$ $3$--manifold, $(Y,\spinc )$, \ons \ associate a rational-valued invariant, $d(Y,\spinc)\in\Q$, called the {\em correction term}.   The definition comes from grading information in their Heegaard Floer homology theory, and is reviewed in Section~\ref{sectionhf}.  For now, we need only the following two properties, which correspond to theorems 4.3 and 1.1 of~\cite{os2}, respectively.
\begin{enumerate}
\item (Additivity) $d(Y\#Y',\spinc\#\spinc')=d(Y,\spinc)+d(Y',\spinc')$; that is, $d$ is additive under connected sums.
\item (Vanishing) Suppose $(Y,\spinc ) = \partial(W, \spinct)$, where $W$ is a $\Q$--homology ball and $\spinct$ is a Spin$^c$  structure on $W$ that restricts to $\spinc$ on $Y$.  Then $d(Y,\spinc) = 0$. 
\end{enumerate}
 
\noindent Our obstruction is defined as a difference of correction terms.

 \begin{definition}\label{dbardef} For $Y$ a $\Z/2\Z$--homology sphere, define $\bar{d}(Y,\spinc ) = d(Y,\spinc ) - d(Y,\spinc_0)$, where $d$ is the \os \ correction term  and $\spinc_0$, as above, is the unique spin structure on $Y$. \end{definition}
  
We have the following vanishing theorem for $\bar{d}$.   
\begin{theorem}\label{T:qhs} Let   $P$ be a finite set of (distinct) odd primes.   Suppose that  $W$ is a $\Z/2\Z$--homology $4$--ball and  $\partial W = \#_{p \in P} Y_{p}\  \#\ Y_1$, where 
\begin{itemize}
\item $p^k H_1(Y_p) = 0$ for each $p \in P$ and some $k> 0$.
\item $Y_1$ is a $\Z$--homology $3$--sphere. 
\end{itemize}
Then for each $p \in P$, there is a metabolizer $M_p \subset H_1(Y_p)$ for which $\bar{d}(Y_p, \spinc_{m_p}) = 0$
for all $m_p \in M_p$.
  \end{theorem}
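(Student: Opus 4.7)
The plan is to invoke Proposition~\ref{prop:cassongordon} and then exploit additivity and vanishing of the correction term. First I would check the hypotheses of that proposition for $W$: we have $H_2(W;\Z/2\Z)=0$ since $W$ is a $\Z/2\Z$--homology ball, and $H_1(\partial W;\Z/2\Z)=0$ because each $H_1(Y_p)$ is $p$--primary with $p$ odd and $H_1(Y_1)=0$. The proposition then produces a metabolizer $M \subset H_1(\partial W) = \bigoplus_{p\in P} H_1(Y_p)$ whose elements parametrize, via $c_1$ and Poincar\'e duality, exactly those $\SpinC$ structures on $\partial W$ that extend across $W$.

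The next step is to show $M$ decomposes as $M = \bigoplus_{p\in P} M_p$ with each $M_p \subset H_1(Y_p)$ a metabolizer. Since the $H_1(Y_p)$ have pairwise coprime orders, for any $m = (m_p)_p \in M$ and any $p_0 \in P$ I can multiply $m$ by an integer $N_{p_0}$ that is congruent to $1 \pmod{|H_1(Y_{p_0})|}$ and $0$ modulo the orders of the other summands; the result $N_{p_0} m$ equals the single component $m_{p_0}$ and lies in $M$, so $M$ is internally direct over the primes. The linking form on $H_1(\partial W)$ is block diagonal with respect to this prime decomposition, so each $M_p$ self-annihilates under the linking form on $Y_p$; the order identities $\prod_p |M_p|^2 = |M|^2 = |H_1(\partial W)| = \prod_p |H_1(Y_p)|$, together with the individual upper bounds $|M_p|^2 \leq |H_1(Y_p)|$ coming from self-annihilation, force equality for every $p$, making each $M_p$ a metabolizer.

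With the decomposition in hand, fix $p_0 \in P$ and $m_{p_0} \in M_{p_0}$, and consider the element of $M$ whose $p_0$--coordinate is $m_{p_0}$ and whose other coordinates vanish. The corresponding $\SpinC$ structure on $\partial W$ is the connected sum of $\spinc_{m_{p_0}}$ on $Y_{p_0}$ with $\spinc_0$ on each remaining summand, and by the vanishing property this structure has $d = 0$. The same conclusion applies to the structure $\spinc_0$ on $\partial W$, which corresponds to $0 \in M$. Subtracting the two identities and expanding each via additivity, every summand other than $Y_{p_0}$ contributes $d(Y_p, \spinc_0) - d(Y_p, \spinc_0) = 0$, leaving exactly $\bar{d}(Y_{p_0}, \spinc_{m_{p_0}}) = 0$.

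The principal content of the argument is the metabolizer-decomposition step combined with Proposition~\ref{prop:cassongordon}, but this is relatively clean here because the prime components have pairwise coprime orders, so splitting a subgroup of a direct sum is automatic. The deeper metabolizer analysis that the paper defers to the appendix is not required for the theorem itself; it is reserved for reducing the Floer-theoretic input needed when applying this vanishing statement to the specific family $\{K_p\}$, where intra-prime structure (metabolizers of $(\Z/p^2\Z)^n$) must be controlled.
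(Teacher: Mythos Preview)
Your proof is correct and follows essentially the same route as the paper's: invoke Proposition~\ref{prop:cassongordon} to obtain a metabolizer $M$ on which $d$ vanishes, split $M=\bigoplus_p M_p$ by primes, and then use additivity on elements of the form $m_{p_0}\oplus 0$ to isolate $\bar d(Y_{p_0},\spinc_{m_{p_0}})$. The only differences are cosmetic: you spell out the CRT argument for the decomposition and the fact that each $M_p$ is a metabolizer (which the paper merely asserts), and you phrase the last step as subtracting two vanishing identities rather than observing that $d(Y_p,\spinc_{m_p})=-d(Z_p,\spinc_0)$ is independent of $m_p$.
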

  
   \begin{proof}
   Proposition \ref{prop:cassongordon}, together with the vanishing property of the correction terms, shows that 
 $H_1(\partial W)$ possesses a metabolizer $M$, satisfying   
 $$d(\partial W, \spinc_m) = 0 \text{\ for\ all\ } m \in M.$$
 There is a decomposition of $M$ as a direct sum $\oplus_{p\in P} M_p$, where $M_p$ is a metabolizer for
  $H_1(Y_p)$ and, in particular, is $p$--torsion.

   For each $p\in P$ we can gather all but the $p$--summand of $\partial W$ to write $\partial W = Y_p \ \# \ Z_p$.  Now, given $m_p \in M_p$, consider  $m = m_p \oplus 0 \in H_1(\partial W) = H_1(Y_p) \oplus H_1(Z_p)$.  Then $d(\partial W, \spinc_{m_p \oplus  0}) = 0$ by the considerations above.   Additivity shows that $$d(Y_p, \spinc_{m_p}) + d(Z_p, \spinc_0) = 0.$$  But this holds for any $m_p\in M_p$; that is, $d(Y_p, \spinc_{m_p})$ is independent of the choice of $m_p \in M_p$.  It follows that $\bar{d}(Y_p, \spinc_{m_p}) = 0$ for all $m_p \in M_p$, as desired.

  \end{proof}
  
By using branched covers, the theorem yields the desired concordance obstruction.
  
  \begin{corollary}\label{C:delta1} Let $K \subset S^3$ be a knot with $p^k H_1(\Sigma(K)) = 0$ for some $k$, where $\Sigma(K)$ is the $2$--fold branched cover of K.  Suppose that $K$ is concordant to a knot $K'$, satisfying $\Delta_{K'}(t)=1$.  Then there exists a metabolizer $M\subset H_1(\Sigma(K))$ such that $\bar{d}(\Sigma(K), \spinc_m) = 0$, for all $m\in M$.
  
  \end{corollary}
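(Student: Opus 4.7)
The plan is to reduce the corollary to an application of Theorem~\ref{T:qhs}, taking the relevant $4$--manifold to be the $2$--fold cover of $B^4$ branched along a slice disk for $K \# (-K')$. Since $K$ and $K'$ are concordant, the connected sum $J = K \# (-K')$ is smoothly slice; fix a smoothly embedded slice disk $D \subset B^4$ with $\partial D = J$, and let $W$ denote the $2$--fold cover of $B^4$ branched along $D$. The boundary of $W$ is the $2$--fold branched cover of $J$, which factors as $\partial W \cong \Sigma(K) \# \Sigma(-K')$. Because $\Delta_{K'}(t) = 1$, the cover $\Sigma(K')$ (and hence $\Sigma(-K')$) is a $\Z$--homology $3$--sphere; setting $Y_1 = \Sigma(-K')$, we have $\partial W = \Sigma(K) \# Y_1$.

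Next I would verify the hypotheses of Theorem~\ref{T:qhs} for $W$. The order $|H_1(\Sigma(K))| = |\Delta_K(-1)|$ is always odd, so the prime $p$ satisfying $p^k H_1(\Sigma(K)) = 0$ must itself be odd, and furthermore $H_1(\Sigma(K); \Z/2\Z) = 0$. Combined with the fact that $Y_1$ is a $\Z$--homology sphere, this yields $H_1(\partial W; \Z/2\Z) = 0$. Because $\det(J) = \det(K)\cdot\det(K')$ is odd, a classical computation (going back to Casson--Gordon) shows that $W$ is a $\Z/2\Z$--homology $4$--ball; in particular $H_2(W; \Z/2\Z) = 0$, which is the other input needed for Theorem~\ref{T:qhs}.

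With these verifications in hand, one applies Theorem~\ref{T:qhs} with $P = \{p\}$, $Y_p = \Sigma(K)$, and $Y_1 = \Sigma(-K')$. The conclusion produces a metabolizer $M \subset H_1(\Sigma(K))$ for which $\bar{d}(\Sigma(K), \spinc_m) = 0$ for every $m \in M$, which is exactly the claimed obstruction. The only non-bookkeeping step is the homological computation showing that $W$ is $\Z/2\Z$--acyclic, and this is a well-known fact for $2$--fold branched covers of $B^4$ along slice disks of knots of odd determinant; thus there is no serious obstacle, and the corollary simply repackages Theorem~\ref{T:qhs} in knot-theoretic language.
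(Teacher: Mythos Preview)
Your proposal is correct and follows exactly the same route as the paper: take $W$ to be the $2$--fold branched cover of $B^4$ along a slice disk for $K\#(-K')$, identify $\partial W$ with $\Sigma(K)\#\Sigma(-K')$, note that $\Sigma(-K')$ is an integral homology sphere since $\Delta_{K'}=1$, and apply Theorem~\ref{T:qhs}. You have simply spelled out in more detail the verifications (oddness of $p$, $W$ being a $\Z/2\Z$--homology ball) that the paper leaves implicit.
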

  
  \begin{proof} Let $W$ be the $2$--fold branched cover of $B^4$ branched over a slice disk for $K\ \# \ -K'$.  Then $\partial W = Y_p \ \# \ Y_1$ where $Y_p = \Sigma(K)$  and $Y_1 = \Sigma(-K')$.  Since $W$ is a $\Z / 2\Z$--homology ball and $Y_1$ is a homology sphere (as follows from  $\Delta_{K'}(t) = 1$) the previous theorem applies. \end{proof}

  \begin{corollary} Let $K = \#_{p \in P}K_p  \cs  K_1$ be a connected sum of knots satisfying
   \begin{itemize} 
   \item $p^k H_1(\Sigma(K_p)) = 0$ for each $p$ in a set of primes, $P$, and some $k$, 
   \item $H_1(\Sigma(K_1)) = 0$.
   \end{itemize}
   Suppose $K$ is slice. Then for each $p \in P$, there is a metabolizer 
   $M_p \subset H_1(\Sigma(K_p))$ for which $\bar{d}(\Sigma(K_p), \spinc_{m_p}) = 0$ for all $m_p \in M_p$.
  
  \end{corollary}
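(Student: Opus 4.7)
The plan is to mimic the proof of Corollary~\ref{C:delta1}, applying Theorem~\ref{T:qhs} directly to the double branched cover of a slice disk for $K$. First I would choose a slice disk $D\subset B^4$ for $K$ and let $W\to B^4$ denote the $2$--fold cyclic branched cover along $D$. A standard transfer argument (using that the branched cover of $B^4$ along a slice disk is homotopy equivalent to a space built from the $1$--complex to which $D$ collapses) shows that $W$ is a $\Z/2\Z$--homology $4$--ball. Its boundary is the $2$--fold branched cover of $S^3$ along $K$, and since branched double covers are additive under connected sum,
\[
\partial W \;=\; \Sigma(K) \;=\; \#_{p\in P}\,\Sigma(K_p)\; \#\; \Sigma(K_1).
\]

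Next I would verify the hypotheses of Theorem~\ref{T:qhs} with $Y_p=\Sigma(K_p)$ and $Y_1=\Sigma(K_1)$. The condition $p^k H_1(Y_p)=0$ is immediate from the assumption on $K_p$. For the integer homology sphere condition on $Y_1$, recall that for any knot $J$ the group $H_1(\Sigma(J))$ is finite of order $|\Delta_J(-1)|$; hence the hypothesis $H_1(\Sigma(K_1))=0$ forces $\Sigma(K_1)$ to be a $\Z$--homology $3$--sphere, as required.

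With these checks in place, Theorem~\ref{T:qhs} applied to $W$ produces, for each $p\in P$, a metabolizer $M_p\subset H_1(\Sigma(K_p))$ such that $\bar d(\Sigma(K_p),\spinc_{m_p})=0$ for every $m_p\in M_p$. This is exactly the desired conclusion.

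I do not anticipate any genuine obstacle: the argument is a packaging of Theorem~\ref{T:qhs} using the multiplicativity of double branched covers under connected sum and the standard fact that the branched double cover of a slice disk yields a $\Z/2\Z$--homology ball. The only point to state carefully is the homology computation for $Y_1$, which reduces to the classical identification $|H_1(\Sigma(K_1))|=|\Delta_{K_1}(-1)|$.
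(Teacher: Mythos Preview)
Your proposal is correct and follows essentially the same route as the paper: take $W$ to be the $2$--fold branched cover of $B^4$ along a slice disk for $K$, identify $\partial W$ with $\#_{p\in P}\Sigma(K_p)\,\#\,\Sigma(K_1)$, and apply Theorem~\ref{T:qhs}. One small redundancy: the hypothesis $H_1(\Sigma(K_1))=0$ already says $\Sigma(K_1)$ is a $\Z$--homology sphere (Poincar\'e duality for a closed orientable $3$--manifold), so the Alexander polynomial detour is unnecessary.
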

  
  \begin{proof} The proof is much like the previous argument, grouping together all $\Sigma(K_q)$ for $q \ne p$, since this space will be a $\Z / p\Z$--homology sphere.
  
   \end{proof}

In the next section, we construct a family of knots, $\{K_p\}$, which we would like to show are linearly independent in $\calc_T / \calc_\Delta$.  By definition, this means that no $\Z$--linear combination $K=\Sigma \ n_pK_p$ is equal to zero or, equivalently, is concordant to a knot with Alexander polynomial one.  Note the sum is in concordance; for instance, $-3K$ means the connected sum of three copies of the mirror of $K$ with reversed orientation.

In pursuit of such independence, the previous corollary reduces the problem to showing that for every $n \ne 0$ and metabolizer $M \subset H_1(\Sigma(nK_p))$, there is some $m \in M$ with $\bar{d}(\Sigma(nK_p), \spinc_m) \ne 0$.  In order to reduce this further, to the case that $n = 1$, we have the following theorem.  The proof (compare~\cite{livingston-naik:4-torsion,livingston-naik:torsion}) represents a significant algebraic detour, and is left to the appendix.

\begin{theorem} Suppose $p$ is a prime satisfying  $p \equiv 3 \mod 4$.     If $K $ satisfies $H_1(\Sigma(K)) = \Z/p^2\Z$ and there is a metabolizer for $M \subset  H_1(\Sigma(nK))$ for which $\bar{d}(\Sigma(nK), \spinc_m) = 0$ for all $m \in M$, then $\bar{d}(\Sigma(K), \spinc_{pk}) = 0 $ for all $k$.

\end{theorem}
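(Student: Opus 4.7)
The strategy is to reduce the claim to a linear-algebra statement about the function $f : \Z/p^2\Z \to \Q$ defined by $f(m) := \bar d(\Sigma(K), \spinc_m)$, and then hand off the remaining work to an algebraic analysis of metabolizers (which will be the main labor, carried out in the appendix).

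Using $\Sigma(nK) \cong \#^n \Sigma(K)$ together with additivity of $d$, the hypothesis translates to
\begin{equation*}
\sum_{i=1}^n f(m_i) = 0 \qquad \text{for every } (m_1, \ldots, m_n) \in M \subset (\Z/p^2\Z)^n,
\end{equation*}
and the goal becomes $f(pk) = 0$ for every $k$. The base case $n = 1$ is immediate: the only order-$p$ subgroup of $\Z/p^2\Z$ is $p\Z/p^2\Z$, the linking form vanishes there, so $M = p\Z/p^2\Z$ and the hypothesis \emph{is} the conclusion.

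For general $n$, I would focus on the subgroup $M \cap P$, where $P := (p\Z/p^2\Z)^n$ is the $p$-torsion subgroup of $H_1(\Sigma(nK))$; the constraints coming from $M \cap P$ involve only values of $f$ on $p\Z/p^2\Z$. Identifying $p\Z/p^2\Z \cong \Z/p\Z$ and setting $g := f|_{p\Z/p^2\Z}$, the problem reduces to showing that the constraints $\sum_i g(\overline m_i) = 0$ indexed by the image $\overline{M \cap P} \subset (\Z/p\Z)^n$, together with the symmetries $g(0) = 0$ and $g(-k) = g(k)$ (the latter from $\SpinC$-conjugation invariance of $d$), force $g \equiv 0$.

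The essential algebraic input, to be proved in the appendix, is that for $p \equiv 3 \pmod 4$ the subgroup $\overline{M \cap P}$ contains elements supported in two coordinates with values $(k, \pm k)$ for every $k \in \Z/p\Z$; each such element yields the constraint $2g(k) = 0$, forcing $g(k) = 0$ since $p$ is odd. The role of $p \equiv 3 \pmod 4$ is that $-1$ is not a square mod $p$, so the binary form $x^2 + y^2$ on $(\Z/p\Z)^2$ is anisotropic. This rigidity restricts which isotropic subspaces of the diagonal form $\sum x_i^2$ on $\F_p^n$ can arise, and thereby restricts the possible metabolizers of $\lambda_n$ and their intersections with $P$. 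For instance, in the case $n = 2$ the non-squareness forces the unique metabolizer to be $M = P$ outright; for larger $n$ there are non-standard metabolizers, but they are still constrained enough to produce the required structure in $\overline{M \cap P}$.

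\textbf{Main obstacle.} The main difficulty is this algebraic classification. For $n \geq 3$, metabolizers other than $P$ exist, and one must rule out ``exotic'' configurations of $\overline{M \cap P}$ that would fail to constrain $g$ adequately. This step is purely linear algebra over $\Z/p^2\Z$, but it is subtle, and closely parallels the arguments about torsion in the concordance group in~\cite{livingston-naik:4-torsion, livingston-naik:torsion}.
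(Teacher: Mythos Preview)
Your reduction to the function $g = f|_{p\Z/p^2\Z}$ and to the constraints coming from $M_p := M \cap P$ is exactly right, and matches the paper. But the specific algebraic mechanism you propose for the appendix---that $\overline{M_p} \subset \F_p^n$ must contain a vector supported on two coordinates with entries $(k,\pm k)$---is false. Take $n=4$, $p=7$, and let $M \subset (\Z/49\Z)^4$ be generated by $v_1=(43,2,3,0)$ and $v_2=(28,3,5,8)$; one checks directly that $v_1\cdot v_1$, $v_2\cdot v_2$, $v_1\cdot v_2$ all vanish mod $49$ and that $M\cong(\Z/49\Z)^2$, so $M$ is a metabolizer. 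Here $\overline{M_p}$ is the $\F_7$--span of $(1,2,3,0)$ and $(0,3,5,1)$, and a short case check shows this plane contains no nonzero vector supported on only two coordinates. More conceptually: whenever $M\cong(\Z/p^2\Z)^{n/2}$, the subspace $\overline{M_p}$ is a \emph{maximal isotropic} subspace of $(\F_p^n,\sum x_i^2)$, but for $p\equiv 3\pmod 4$ every vector $k e_i \pm k e_j$ has norm $2k^2\ne 0$ and so is \emph{never} isotropic. Thus the very anisotropy of $x^2+y^2$ you invoke is what kills your plan, rather than enabling it.

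The paper's appendix takes a different route. A counting/Gauss--Jordan argument (using only $|M|=p^n$, not the linking form) produces a single element $z\in M_p$ with all entries divisible by $p$ and at least $n/2$ of them equal to $p$. This yields one relation $\sum_i \alpha_i\,\bar d_i = 0$ among the values $\bar d_i := \bar d(\Sigma(K),\spinc_{pi})$, $1\le i\le q:=(p-1)/2$. The crucial step is then to exploit the action of $(\Z/p\Z)^*/\{\pm1\}\cong \Z/q\Z$ on $M_p$ by coordinatewise multiplication: under this action the relation vector becomes a polynomial $f_z(t)\in\Q[\Z_q]\cong\Q[t]/(t^q-1)$ with nonnegative coefficients and constant term at least the sum of the others. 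Since $p\equiv 3\pmod 4$ forces $q$ to be odd, every $q$--th root of unity has real part strictly greater than $-1$, so $f_z$ cannot vanish at any of them; hence $f_z$ generates $\Q[\Z_q]$ as an ideal, and the single relation propagates to $\bar d_i=0$ for all $i$. This is where the congruence hypothesis actually enters.
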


Combining this with the work above, we immediately have the following theorem. 

\begin{theorem} \label{independence} Suppose that $\{K_p\}$ is a collection of knots  indexed by the set of primes $p \equiv 3 \mod 4$.  Suppose further that  $H_1(\Sigma(K_p)) = \Z/ p^2\Z$, and that for each $p$,  $\bar{d}(\Sigma(K_p), \spinc_{pk}) \ne 0$ for some $k$. Then no $\Z$--linear combination of the knots in $\{K_p\}$ is concordant to a knot with trivial Alexander polynomial.

\end{theorem}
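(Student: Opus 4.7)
The plan is to argue by contradiction, chaining together the two preceding results. Suppose some nontrivial $\Z$--linear combination $K = \#_{p \in P}\, n_p K_p$, with $P$ a finite set of primes $\equiv 3 \pmod{4}$ and each $n_p \neq 0$, is concordant to a knot $K'$ with $\Delta_{K'}(t) = 1$. Since $\Delta_{K'} \equiv 1$ forces $\Sigma(K')$ (and hence $\Sigma(-K')$) to be a $\Z$--homology sphere, the slice knot $K \# (-K')$ decomposes as $\#_{p \in P}\,(n_p K_p) \# (-K')$, whose summands satisfy the hypotheses of the corollary following Corollary \ref{C:delta1}: each $\Sigma(n_p K_p)$ has $p$--primary first homology $(\Z/p^2\Z)^{|n_p|}$, and $\Sigma(-K')$ is an integral homology sphere. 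That corollary then supplies, for each $p \in P$, a metabolizer $M_p \subset H_1(\Sigma(n_p K_p))$ on which $\bar{d}(\Sigma(n_p K_p), \spinc_{m_p}) = 0$ for every $m_p \in M_p$.

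The next step is to descend from vanishing on the metabolizer of $\Sigma(n_p K_p)$ to vanishing on the relevant subgroup of $H_1(\Sigma(K_p))$ itself. For this I apply the (unnumbered) theorem stated immediately before Theorem \ref{independence} to each individual $K_p$, with multiplier $n = n_p$. Its hypotheses---namely $p \equiv 3 \pmod{4}$, $H_1(\Sigma(K_p)) = \Z/p^2\Z$, and the existence of a metabolizer on $\Sigma(n_p K_p)$ annihilating $\bar{d}$---are exactly what has just been verified, so its conclusion yields $\bar{d}(\Sigma(K_p), \spinc_{pk}) = 0$ for every integer $k$. This directly contradicts the assumption that for each $p$ some such value is nonzero, completing the argument.

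The main obstacle in this proof is concentrated in the descent theorem invoked in the second step, whose proof is deferred to the appendix. All of the nontrivial algebra concerning metabolizers for linking forms on $(\Z/p^2\Z)^n$---in particular, the role of the congruence $p \equiv 3 \pmod{4}$ in constraining such metabolizers so as to force every element of the index--$p$ subgroup of a single $H_1(\Sigma(K_p))$ to be detected---is packaged there. Granted that input, Theorem \ref{independence} itself is a short deduction requiring no further Heegaard Floer machinery beyond the additivity and $\Q$--homology--ball vanishing properties of $d$ already recorded in Section~\ref{sectiondandobstruct}.
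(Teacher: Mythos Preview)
Your proof is correct and follows exactly the route the paper intends: it simply chains Corollary~3.3 (applied to the slice knot $K\#(-K')$) with the descent theorem preceding Theorem~\ref{independence}, precisely as indicated by the paper's remark that the latter ``immediately'' yields the result. The only minor point left implicit is the case $n_p<0$, which is handled by applying the descent theorem to $-K_p$ with multiplier $|n_p|$ and using that $\bar d$ changes sign under orientation reversal.
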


In Section~\ref{sectioncompute} we show that a family of topologically slice knots constructed in the next section satisfy the hypotheses, thus  demonstrating the truth of Theorem~\ref{infgen}.

 %%%%%%%SECTION%%%%%%%%%%%%%%
 
\section{The knots $K_p$.}  \label{sectionknots}
 
 In this section, we construct an infinite family of topologically slice knots.   These knots will be used with Theorem \ref{independence} to prove Theorem \ref{infgen}. The details of the construction were  motivated by a desire to find knots whose $2$--fold branched covers are realized by surgery on knots in $S^3$ with computable Floer invariants.  As discussed in Section \ref{sectionhf}, these knot Floer homology invariants can be used to determine the correction terms.

   Figure~\ref{figure1} illustrates a knot $K_3$ in the family of knots $\{K_p\}$, where $p$ is a prime satisfying $p \equiv 3 \mod 4$ and  $J_p$ is the connected sum of $(3p-1)/4$ positive-clasped, untwisted Whitehead doubles of the right-handed trefoil knot.  The orientation-preserving band in the non-orientable  surface $F_p$ bounded by $K_p$ is untwisted and has the knot $J_p$ tied in it.
   
 \begin{prop} $K_p$ is topologically locally flat slice.
  \end{prop}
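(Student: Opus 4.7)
My plan is to construct a topological locally flat slice disk for $K_p$ by compressing the punctured Klein bottle $F_p$ along the core of its orientation-preserving band, using a topological slice disk for $J_p$ supplied by Freedman's theorem.

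First I observe that $J_p$ is topologically locally flat slice. The untwisted Whitehead double of any knot has trivial Alexander polynomial, and the Alexander polynomial is multiplicative under connected sum, so $\Delta_{J_p}(t)=1$. Freedman's theorem~\cite{freedman:non-simply-connected,freedman-quinn} then produces a locally flat disk $D_J \subset B^4$ with $\partial D_J = J_p$. Because $D_J$ is nullhomologous in $B^4$, its normal bundle restricts on $J_p$ to the Seifert $0$-framing.

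Now push $F_p$ into $\mathrm{int}(B^4)$, keeping $K_p=\partial F_p$ fixed in $S^3=\partial B^4$, and let $\alpha\subset F_p$ denote the core of the orientation-preserving band. Since that band is untwisted and has $J_p$ tied into it, $\alpha$ is isotopic in $S^3$ to $J_p$, and the framing $\alpha$ receives from the band is exactly the Seifert framing of $J_p$. Using a copy of $D_J$ together with a disjoint parallel push-off (which exists via the normal bundle of $D_J$ and restricts on the boundary to the Seifert framing), I compress $F_p$ along $\alpha$: excise an annular neighborhood $\nu(\alpha)$ of $\alpha$ from $F_p$ and cap the two resulting boundary circles with the two parallel copies of $D_J$, pushed deeper into $B^4$ so as to be disjoint from $F_p\setminus\nu(\alpha)$. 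The agreement of framings (band framing $=$ Seifert framing $=$ framing induced by the normal bundle of $D_J$) ensures that the resulting surface $F_p'$ is a properly embedded locally flat surface in $B^4$ with $\partial F_p'=K_p$. Since $\alpha$ is two-sided and non-separating on the punctured Klein bottle, $F_p'$ is connected with a single boundary component, and
\[
\chi(F_p')=\chi(F_p)+2=-1+2=1,
\]
so $F_p'$ is a disk. Hence $K_p$ bounds a topological locally flat disk in $B^4$.

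The only subtlety is the framing match between the band framing of $\alpha$ and the framing induced on $J_p$ by $D_J$; both equal the Seifert $0$-framing, so the compression gives an embedded locally flat surface, and the Euler characteristic computation then forces it to be a disk.
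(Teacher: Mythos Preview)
Your proof is correct and follows essentially the same approach as the paper: compress the punctured Klein bottle $F_p$ along the core $\alpha$ of the untwisted band using a Freedman slice disk for $J_p$, yielding a disk bounded by $K_p$. You have simply spelled out in more detail the framing compatibility and the Euler-characteristic check that the paper leaves implicit.
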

   
\begin{proof} The surface $F_p$ is a punctured Klein bottle.  The core of the left band of $F_p$  is a simple closed curve $\alpha$, which represents the knot $J_p$.  Since the neighborhood of $\alpha$ is an untwisted annulus and $J_p$ is topologically slice by Freedman's theorem (untwisted doubles have trivial Alexander polynomial), $F_p$ can be surgered in $B^4$ along $\alpha$.  Performing this surgery on the punctured Klein bottle yields a disk.

\end{proof}

 \noindent For $q\in \Z$, let $S^3_q(K)$ denote the manifold obtained by $q$--surgery on a knot $K \subset S^3$.
   
   \begin{prop} \label{surgerydiagram}The $2$--fold branched cover $ \Sigma(K_{p})  = S^3_{p^2}( 2J_p  \cs T_{p-1,p})$, where $T_{p-1,p}$ is the $(p-1,p)$--torus knot.  In particular, 
$H_1(\Sigma(K_{p})) = \Z / p^2 \Z$, and hence  $K_p$ has non-trivial Alexander polynomial.
\end{prop}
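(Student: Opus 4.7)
The plan is to compute $\Sigma(K_p)$ by identifying it with the boundary of the $2$--fold cover of $B^4$ branched along $F_p$ pushed into $B^4$, then converting the resulting handle decomposition to an explicit surgery description in $S^3$ and simplifying via Kirby calculus.

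First, since $F_p$ is a disk with two bands --- an orientation-preserving, untwisted band with $J_p$ tied in it, and an orientation-reversing (M\"obius) band --- the Gordon--Litherland construction (equivalently, the Akbulut--Kirby procedure for branched covers of disk-with-band surfaces) gives a surgery presentation of $\Sigma(K_p)$ in $S^3$ as integer surgery on a two-component link. The components are built from the cores of the bands, with framings and linking numbers read off from the Gordon--Litherland pairing of $F_p$: the orientation-preserving band contributes a component carrying the knot type (a doubled copy of) $J_p$, the M\"obius band contributes an unknotted component, and the two components are linked in $S^3$ with linking number $p$ (accounting for the $p$ half-twists between the bands). The resulting $2\times 2$ framing matrix has Smith normal form $\mathrm{diag}(1, p^2)$, consistent with cyclic $H_1$ of order $p^2$.

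Second, I would reduce this two-component surgery to a one-component surgery by Kirby-calculus moves, specifically by blowing down the unknotted component. Blowing down changes the framing on the remaining component by the square of the linking number, producing framing $p^2$, and it inserts a full twist into the $p$ parallel strands of the remaining component that intersected the spanning disk of the unknot. Identifying the resulting twisted tangle as the closure of a $(p-1,p)$-torus braid grafted onto the doubled core of the first band produces the knot $2J_p\,\#\,T_{p-1,p}$. This yields $\Sigma(K_p)=S^3_{p^2}(2J_p\,\#\,T_{p-1,p})$.

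Finally, the homology statement $H_1(\Sigma(K_p))=\Z/p^2\Z$ is immediate from integer surgery on a knot in $S^3$ with framing $p^2$, and the non-triviality of $\Delta_{K_p}$ follows from the classical identity $|\Delta_{K_p}(-1)|=|H_1(\Sigma(K_p))|=p^2\ne 1$.

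The main obstacle will be the second step: verifying that the torus knot appearing in the connected sum is precisely $T_{p-1,p}$, rather than some other torus knot such as $T_{2,p}$. This requires careful diagrammatic bookkeeping of how the $p$-fold linking between the band cores and the doubled structure of the orientable band's attaching circle interact under the blow-down. The tracking of strand counts and braid words during this move is the technical heart of the argument; once the identification is made, all remaining assertions follow from standard surgery formulas.
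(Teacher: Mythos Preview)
Your proposal is correct and follows essentially the same approach as the paper: apply the Akbulut--Kirby construction to the disk-with-bands description of $F_p$ to obtain a two-component surgery diagram (the paper records the framings explicitly as $0$ and $-1$, with $-p$ full twists between the components, and with $J_p$, $J_p^r$ tied into the $0$--framed curve), then blow down the $-1$--framed unknot to arrive at $p^2$--surgery on $T_{p-1,p}\#J_p\#J_p^r$. The only place to sharpen your write-up is to state the framings $0$ and $-1$ at the outset---these come directly from the Gordon--Litherland form and are what make the blow-down possible---rather than recording only the Smith normal form of the linking matrix.
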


\begin{proof} 
  According to~\cite{akbulutkirby}, the $2$--fold branched cover of $K_p$, $\Sigma(K_p)$, is given by the surgery diagram illustrated in Figure~\ref{figure2}.  There are $-p$ full twists between the components of the $2$--component link shown, and the surgery coefficients are 0 and $-1$. The notation $J^r$ denotes the knot $J$ with its string orientation reversed.  Since doubled knots are reversible, in our case $J^r = J$.

\begin{figure}[ht]
\psfrag{0}{$0$}
\psfrag{m1}{$-1$}
\psfrag{mp}{$-p$}
\psfrag{jp}{$J_p$}
\psfrag{jpr}{$J_p^r$}
\fig{.5}{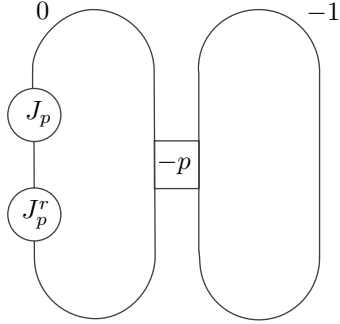}
\caption{Surgery diagram of $\Sigma(K_p)$.}\label{figure2}
\end{figure}

If an unknotted component of a surgery diagram of a $3$--manifold has framing $-1$, that component can be removed, with the effect of putting a full twist in the curves that pass through it and increasing their framings by the square of the linking number with the unknotted component.  (This procedure is referred to as {\it blowing down} the $-1$.)  In the present case, the result is the surgery diagram given in Figure~\ref{figure3}.

\end{proof}
\begin{figure}[h]
\psfrag{p2}{$p^2$}
\psfrag{tp}{$T_{p-1,p}$}
\psfrag{jp}{$J_p$}
\psfrag{jpr}{$J_p^r$}
\fig{.5}{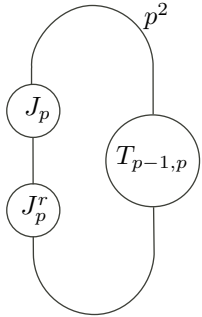}
\caption{Surgery diagram of $\Sigma(K_p)$ as $p^2$--surgery on $T_{p-1,p} \cs J_p \cs J_p^r$.}\label{figure3}
\end{figure}

The result of~\cite{os1} that we will use to compute  $ d (S^3_q(K), \spinc_m)$ requires that $q \ge 2g_3(K) -1$. The following lemma verifies that that this condition is satisfied.
\begin{lemma} The $3$--genus of $T_{p-1,p} \cs J_p \cs J_p^r$ is  $ \frac{p^2 +1}{2}$.  
\end{lemma}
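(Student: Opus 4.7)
The proof plan is to apply additivity of the Seifert genus under connected sum and then compute the three constituent genera separately.

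First, recall Schubert's classical result that $g_3(A\cs B)=g_3(A)+g_3(B)$ for any pair of knots. So the problem reduces to determining $g_3(T_{p-1,p})$, $g_3(J_p)$, and $g_3(J_p^r)$. For the torus knot $T_{p-1,p}$ we use the standard Seifert genus formula, giving $g_3(T_{p-1,p})=\frac{(p-1)(p-2)}{2}$. Since $J_p^r$ is just $J_p$ with reversed string orientation, $g_3(J_p^r)=g_3(J_p)$.

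Next I would handle $J_p$, which by construction is the connected sum of $(3p-1)/4$ untwisted positive-clasped Whitehead doubles of the right-handed trefoil. Each such Whitehead double $D(T_{2,3})$ bounds the obvious twice-punctured disk-with-clasp spanning surface, which is a genus-one Seifert surface, so $g_3(D(T_{2,3}))\leq 1$. On the other hand $D(T_{2,3})$ is a nontrivial knot---most efficiently, Hedden's computation $\tau(D_+(T_{2,3}))=1\neq 0$ rules out it being the unknot, though this can also be seen via the Jones polynomial or the Casson invariant of the double branched cover. Hence $g_3(D(T_{2,3}))=1$, and by additivity $g_3(J_p)=(3p-1)/4$.

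Finally, assembling the three pieces:
\[
g_3\bigl(T_{p-1,p}\cs J_p\cs J_p^r\bigr)
=\frac{(p-1)(p-2)}{2}+2\cdot\frac{3p-1}{4}
=\frac{(p-1)(p-2)+(3p-1)}{2}
=\frac{p^2+1}{2}.
\]
There is no serious obstacle; the only subtlety worth flagging is citing a precise reference for the nontriviality (hence genus one) of the untwisted Whitehead double of the trefoil, but any of the invariants mentioned above suffices.
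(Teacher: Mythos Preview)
Your argument is correct and follows essentially the same route as the paper: additivity of Seifert genus under connected sum, the standard formula $g_3(T_{p-1,p})=\frac{(p-1)(p-2)}{2}$, and $g_3(D(T_{2,3}))=1$, combined exactly as you do. The paper's proof is terser---it simply asserts $g_3(D(T_{2,3}))=1$ without justification---so your added remarks on nontriviality are a harmless elaboration rather than a different approach.
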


\begin{proof} We have $g_3(T_{p-1,p}) = \frac{(p-2)(p-1)}{2}$.  Since  $J_p = \frac{3p-1}{4} D(T_{2,3})$ and $g_3(D(T_{2,3})) = 1$, we have 
$$g_3(T_{p-1,p} \cs J_p \cs J_p^r) =  \frac{(p-2)(p-1)}{2} + \frac{3p-1}{2} = \frac{p^2 +1}{2}.$$
\end{proof}

 %%%%%%%SECTION%%%%%%%%%%%%%%

\section{Background on Heegaard Floer homology}\label{sectionhf}
 
In this section, we collect some basic facts about \ons's  Heegaard Floer homology invariants.   Our main purpose is
to introduce the algebraic structures inherent in the theory.  These structures will subsequently be exploited, both to
define the correction terms used for our concordance obstruction and to aid in its calculation.  Details regarding the 
invariants used here can be found in \cite{os3,os2,Knots}.  Throughout,  we let $\F=\Z/2\Z$ denote the field with $2$ 
elements.  Many of the chain complexes and homology groups in what follows have gradings; these are indicated by subscripts when we discuss groups in a single grading and are omitted otherwise.
  
 \subsection{The chain complexes and the definition of $d$} In \cite{os3}, \ons \ associate various chain complexes to a pair, $(Y,\spinc)$, consisting of an oriented $\Q$--homology sphere, $Y$, and a $\SpinC$ structure $\spinc$ (invariants are defined for arbitrary $3$--manifolds, but those  of rational homology spheres will be sufficient for our applications).    As input,  the theory takes a pointed Heegaard diagram for $Y$ consisting of a surface $\Sigma$ of genus $g$, together with two $g$--tuples of attaching curves, $\vec{\bf \alpha}$, $\vec{\bf \beta}$, and a distinguished basepoint in their complement, $z$.  By taking the $g$--fold symmetric product of the diagram one arrives at a $2g$--dimensional (complex) manifold, together with two $g$--dimensional submanifolds, denoted $\Ta,\Tb$.  The basepoint leads to a complex hypersurface, $V_z$, consisting of those unordered $g$--tuples of points on $\Sigma$, at least one of which is $z$. 
 
 The most general Heegaard Floer complex is denoted $\CFinf(Y,\spinc)$.  It is generated by pairs, $[\x,i]$, where $\x\in \Ta\cap \Tb$ is an intersection point, $i\in \Z$ is an integer, and $\spinc_z(\x)=\spinc$ (the basepoint induces a map $\spinc_z\co \Ta\cap \Tb\rightarrow \SpinC(Y)$).   Roughly speaking, the boundary operator counts pseudo-holomorphic disks in the symmetric product that connect $\x$ to $\y$.  The integer keeps track of the algebraic intersection number of such disks with the hypersurface, $V_z$.   By identifying $[\x,i]$ with $U^{-i}\cm\x$, the chain groups can be thought of more algebraically as the free $\F[U,U^{-1}]$ module generated by intersection points of $\Ta$ and $\Tb$.  Here $U$ is a formal polynomial variable, and under this correspondence we have $U^i\cm [\x,j]=[\x,j-i]$.   The complex is relatively $\Z$--graded by the formula  $$ gr([\x,i])-gr([\y,j])= \mu(\phi)-2(i-j),$$
 where $\mu(\phi)$ denotes the Maslov index of any Whitney disk connecting $\x$ to $\y$.   With this formula, it is clear that the variable $U$ (respectively $U^{-1}$) carries a grading of $-2$ (respectively $2$).  Moreover, the complex can be endowed with an absolute grading, which takes values in $r+\Z$, where $r\in \Q$ is a fixed (dependent only on $\spinc$) rational number.  This grading, which we denote $gr$, is defined in the spirit of index theory for $4$--manifolds with boundary, \cite{aps:I} and considers characteristic numbers of a $\SpinC$ cobordism between $(Y,\spinc)$ and the $3$--sphere.  For details on the absolute grading, see \cite{HolDiskFour,os2}.

We denote the homology of the above complex by $\HFinf(Y,\spinc)$.  By itself, this invariant is rather uninteresting, as indicated by the following theorem

\begin{theorem}\cite[Theorem 10.1]{oz:hf-properties}\label{infinity}
Let $Y$ be a rational homology three sphere.  Then $$\HFinf(Y,\spinc)\cong \F[U,U^{-1}],$$
 for any $\SpinC$ structure, $\spinc$.  
 \end{theorem}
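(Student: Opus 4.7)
The plan is to prove the result in two stages: first compute $\HFinf(S^3)$ directly from a minimal Heegaard diagram, and then propagate to arbitrary rational homology spheres via the \ons{} surgery long exact sequence, using crucially the collapse of certain $H^1$-based algebraic structures in the rational homology sphere setting.

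For the base case $Y = S^3$, I would take the standard genus-one Heegaard diagram in which the $\alpha$-curve and $\beta$-curve meet transversely in a single point $\x$. Then $\Ta \cap \Tb = \{\x\}$, so $\CFinf(S^3,\spinc_0)$ is the free $\F[U,U^{-1}]$-module on $[\x,0]$. With only one generator in each relative grading, the differential is forced to vanish and one concludes $\HFinf(S^3) \cong \F[U,U^{-1}]$. One then checks that, under the absolute $\Q$-grading, the bottom of this tower sits in grading $0$, fixing the isomorphism type completely.

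To pass to a general rational homology sphere $Y$, I would write $Y$ as integer surgery on a framed link $L \subset S^3$ whose linking matrix is nondegenerate (so that each intermediate manifold remains a rational homology sphere) and induct on the number of components of $L$. At each step one applies the surgery exact triangle relating $\HFinf(Y,\spinc)$ to $\HFinf$ of two simpler rational homology spheres obtained by altering the framing on one component. The inductive hypothesis gives the two outer terms; the long exact sequence, combined with the grading constraints and a rank count, then pins down the middle.

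The main obstacle is organizing the induction so that the $\spinc$-decompositions on neighboring manifolds match up correctly: the surgery triangle typically mixes different $\spinc$ structures, and one must assemble the total contribution in each $\spinc$-equivalence class on $Y$ so as to extract rank one per summand. The key algebraic simplification that makes the induction close is that for a rational homology sphere $H^1(Y;\Z)/\Tors = 0$, so the exterior algebra of $H^1/\Tors$ that would otherwise act nontrivially on $\HFinf$ is trivial, ruling out additional summands. Combined with the absolute $\Q$-grading (which lies in a single coset $r + \Z$ and is shifted by $-2$ under the $U$-action), this uniquely determines $\HFinf(Y,\spinc)$ as $\F[U,U^{-1}]$ in every $\SpinC$ structure.
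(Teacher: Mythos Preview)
The paper does not prove this statement; it is quoted from \cite[Theorem~10.1]{oz:hf-properties} as background with no argument supplied, so there is no in-paper proof to compare against.

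Your outline has a genuine gap in the inductive step. The surgery exact triangle does not relate three rational homology spheres: a surgery triad consists of three slopes on the boundary torus whose homology classes sum to zero, and for a null-homologous knot $K$ in a rational homology sphere this forces one of the three fillings (the $0$--surgery) to have $b_1>0$. Your stipulation that the linking matrix be nondegenerate controls the \emph{final} surgered manifold, not the third manifold appearing in each triangle along the way; hence the induction cannot be confined to rational homology spheres, and the hypothesis you invoke---that $H^1(Y;\Z)/\Tors=0$ kills any exterior-algebra contribution---is unavailable precisely where you need it. The actual argument in \cite{oz:hf-properties} confronts this head-on, either by passing to twisted coefficients (for which $\HFinf$ can be computed uniformly regardless of $b_1$) or by simultaneously determining $\HFinf$ for the $b_1>0$ terms in the triangle and analyzing the connecting maps. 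Even setting this aside, ``rank count plus grading constraints'' does not by itself determine the middle term of a long exact sequence; knowing two of three groups in an exact triangle leaves the third undetermined without control of the maps. You would need an additional ingredient---for instance, that one of the cobordism-induced maps on $\HFinf$ is an isomorphism in all sufficiently large degrees---to close the induction.
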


The theory becomes more interesting when one notices that $\CFinf(Y,\spinc)$ has a distinguished subcomplex, consisting of pairs $[\x,i]$ with $i<0$.  We denote this subcomplex by $\CFm(Y,\spinc)$.  That this is a subcomplex follows from the fact that pseudo-holomorphic disks intersect $V_z$ positively, when transverse.    We have the corresponding short exact sequence
\begin{equation}\label{eq:ses} 0\rightarrow \CFm(Y,\spinc) \rightarrow \CFinf(Y,\spinc) \rightarrow \CFinf(Y,\spinc)/\CFm(Y,\spinc) \rightarrow 0.\end{equation}
The quotient complex, which we henceforth denote by $\CFp(Y,\spinc)$, is generated by pairs $[\x,i]$ with $i\ge 0$.

There is a fourth complex, denoted $\CFa(Y,\spinc)$, which frequently appears.  It can be described as the complex  ker$\{\CFp\overset{\cm U}\rightarrow \CFp\}$.  Perhaps more concretely, it is the complex generated by pairs $[\x,0]$, whose boundary operator counts holomorphic disks that miss the hypersurface.

Theorem $1.1$ of \cite{os3} indicates that the  homology of all four of these chain complexes  are invariants of the pair, $(Y,\spinc)$.  That is, they are independent of the many choices involved in the construction: for example the Heegaard diagram, the almost complex structure on the symmetric product, and the basepoint.   Note that all four groups are naturally modules over $\F[U]$, and the module structure is also an invariant of $(Y,\spinc)$.   Of course it follows from the definitions that $\HFinf$ is also a module over $\F[U,U^{-1}]$, and that the module structure on $\HFa$ is trivial (in the sense that $U$ acts as zero).

Theorem \ref{infinity} allows the definition of a numerical invariant, the so-called ``correction term" or $d$--invariant, of a $\SpinC$ $3$--manifold.  Appropriately interpreted, this invariant serves as our concordance obstruction (see Definition \ref{dbardef} above).

\begin{definition}$$d(Y,\spinc)= \underset{\alpha\ne0\in \HFp(Y,\spinc)}{\text{min}}  \{gr(\alpha)\ |\ \alpha\in \text{Im} \ U^k, \text{\ for\ all\ } k\ge 0 \}$$

\end{definition}
Note that  $d(Y,\spinc)$ is a rational number, in general.  The additivity and vanishing properties of $d$  mentioned in 
Section  \ref{sectiondandobstruct}  follow immediately from Theorems $4.3$ and $1.1$ of \cite{os2}, respectively.

  We conclude this subsection with an example.
\bigskip

\noindent{\bf Example: the $3$--sphere.}    Examining the standard genus one Heegaard diagram for the $3$--sphere allows one to compute its Floer homology directly.  We have the following isomorphisms of $\F[U]$ modules, where the grading of the element $1\in \F[U]$ is equal to $0$, and $U$ carries (as above) a grading of $-2$.  Note that $S^3$ carries a unique $\SpinC$  structure, so we suppress this from the notation.
$$
\begin{array}{l}
\HFinf(S^3)\cong \F[U,U^{-1}]\\
\HFp(S^3)\cong \F[U,U^{-1}]/U\cm \F[U]\\
\HFm(S^3)\cong U\cm \F[U]\\
\HFa(S^3)\cong \F[U]/U\cm \F[U]\cong \F\\
\end{array}
  $$

 Moreover, the long exact sequence of $\F[U]$--modules coming from the short exact sequence \eqref{eq:ses} becomes
 $$ 0\rightarrow U\cm\F[U]\rightarrow \F[U,U^{-1}] \rightarrow \F[U,U^{-1}]/U\cm \F[U]\rightarrow 0,$$ 
 and the map $\HFa(S^3)\rightarrow \HFp(S^3)$ is injective. 
 
 Note that $1\in \HFp(S^3)$ is in the image of $U^k$ for all $k\ge0$, and has grading $0$. Thus $d(S^3)=0$.

\subsection{Knot Floer homology and surgery on knots}\label{subsec:HFKbackground}  The discussion of the last section implies that $\CFinf(Y,\spinc)$ is naturally a $\Z$--filtered chain complex.  Indeed, the  subcomplexes $U^d \cm \CFm(Y,\spinc)$  are the corresponding terms in this filtration.  Equivalently, the filtered subcomplexes are those generated by pairs $[\x,i]$ satisfying $i<-d$, where $d\in \Z$. 

 A knot $K\subset Y$ induces a second filtration on $CF^\infty(Y,\spinc )$, whose construction we briefly describe.    For our purposes, it will be sufficient to consider the case of knots in the $3$--sphere, $K\subset S^3$, and henceforth we deal exclusively with this special case.

 Given $K\subset S^3$, the second filtration of $\CFinf(S^3)$ arises by consideration of a doubly-pointed Heegaard diagram adapted to the knot.  This is a Heegaard diagram with basepoints $z$ and $w$ for which $K$ can be realized as the union of two arcs $t_\alpha\cup t_\beta$, where $t_\alpha$ (respectively $t_\beta$)  is properly embedded in the handlebody specified by $\vec{\bf \alpha}$ (respectively $\vec{\bf \beta}$), and both arcs have common boundary consisting of $z\cup w$.    
 
The data above allows us to define a complex $\CFKinf(K)$, freely generated as an $\F[U,U^{-1}]$--module by triples $[\x,i,j]$, $i,j\in \Z$ satisfying a homotopy-theoretic constraint: $$\langle c_1(\underline\spinc(\x)),[S]\rangle +2(i-j)=0.$$
Here $c_1(\underline\spinc(\x))$ is the Chern class of a $\SpinC$  structure associated to $\x$ on $S^3_0(K)$ (the manifold obtained by zero surgery on $K$), and $[S]\in H_2(S^3_0(K);\Z)$  is the class that arises from extending a Seifert surface by the meridional disk of the surgery torus.   The boundary operator on $\CFKinf(K)$ is defined as before, except that now the second index keeps track of the intersection number of holomorphic disks with the hypersurface $V_w$, specified by the additional basepoint.  Forgetting $j$, we are left with $\CFinf(S^3)$, and  positivity of intersections ensures that the projection $[\x,i,j]\rightarrow j$ provides it with a second $\Z$--filtration. 

Thus  $\CFKinf(K)$ is a $\Z\oplus\Z$--filtered chain complex; that is, a chain complex $C_*$, together with a map $\Filt: C_*\rightarrow \Z\oplus\Z$ satisfying $\Filt(\partial\x)\le \Filt(\x)$, where $\le$ is the standard partial order on $\Z\oplus \Z$.  Theorem $3.1$ of \cite{Knots} shows that the $\Z\oplus \Z$--filtered chain homotopy type of $\CFKinf(K)$ is an invariant of the isotopy class of $K$.

Much of the power of an invariant that takes values in the  $\Z\oplus\Z$--filtered chain homotopy category  lies in our ability to derive further invariants by considering the homology of sub and quotient complexes.  For instance, we can consider the subcomplex $$C_*\{\mathrm{max}(i,j-m)<0\}\subset \CFKinf(K)$$ 
generated by triples $[\x,i,j]$ satisfying max$(i,j-m)<0$.  The homology of this subcomplex is also an invariant of $K$.   The corresponding quotient complex is generated by triples satisfying max$(i,j-m)\ge0$, and is denoted $C_*\{\mathrm{max}(i,j-m)\ge0\}$.  We will suppress $K$ from the notation for complexes derived from $\CFKinf(K)$ whenever the particular knot is clear from the discussion.

In the present context, the importance of these knot invariants comes from the following theorem, which indicates that they can be identified with the Floer homology of manifolds obtained by surgery on $K$. To make this precise,  denote by $\spinc_m\in \SpinC(S^3_q(K))$ the unique $\SpinC$  structure  that extends over the $2$--handle cobordism  induced by $q$--surgery on $K$, to a $\SpinC$  structure, $\spinct_m$ satisfying:
$$\langle c_1(\spinct_m),[S]\rangle + q = 2m.$$
In terms of this labeling of $\SpinC$   structures, we have

\begin{theorem}\cite[Theorem $4.4$]{Knots}\label{thmcompute} Let $K \subset S^3$ be a knot of Seifert genus $g_3(K) = g$, and let $q$ be a positive integer such that $q \ge 2g-1$.  Then for all  $m$ satisfying $|m | \le   \frac{1}{2}(q-1)$, we have a chain homotopy equivalence of graded complexes over $\F[U]$,
$$CF^+_*(S^3_q(K), \spinc_{m}) \simeq   C_{*+s(q,m)}\{ \mathrm{max}(i,j-m)\ge0\},$$

\noindent where the grading shift $s(q,m)$ is given by the following formula 
$$s(q,m) = \frac{  -{(2m-q)^2} +q}{4q}.$$ 

\end{theorem}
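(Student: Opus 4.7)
The plan is to prove this via a direct Heegaard-diagrammatic comparison between the knot $K \subset S^3$ and its $q$--surgery $S^3_q(K)$. I would begin with a doubly-pointed Heegaard diagram $(\Sigma, \vec\alpha, \vec\beta, w, z)$ for $(S^3, K)$, arranged so that $\beta_g$ is a small meridian of $K$ separating $w$ from $z$. Replacing $\beta_g$ by a curve $\gamma_g$ representing a longitude wound $q$ times around $\beta_g$ (an essentially $(q,1)$--slope on the torus neighborhood of $K$) yields a Heegaard diagram for $S^3_q(K)$. Each intersection point $\x \in \Ta \cap \Tb$ now gives rise to $q$ intersection points in $\Ta \cap \Tc$, indexed by which of the $q$ parallel strands of $\gamma_g$ is used at the $\beta_g \to \gamma_g$ location.

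Step 1 (generators and $\SpinC$ matching): Using the $2$--handle cobordism and the formula $\langle c_1(\spinct_m), [\widehat S]\rangle + q = 2m$ characterizing $\spinc_m$, show that the intersection points lying in $\spinc_m$ on the surgery side correspond bijectively to triples $[\x, i, j] \in \CFKinf(K)$ with a prescribed linear relation between $i$, $j$, $m$, and the strand index. The hypothesis $q \geq 2g-1$ is precisely what forces the relevant filtration indices $(i,j)$ for generators of $\CFp$ in $\spinc_m$ (with $|m| \leq (q-1)/2$) to lie in a unique residue class modulo $q$, making the identification well-defined. Step 2 (differentials): Stretch the neck along the winding region of $\gamma_g$. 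A standard Gromov-compactness argument shows that pseudo-holomorphic disks in $\mathrm{Sym}^g(\Sigma)$ for the surgery diagram degenerate into holomorphic disks for the knot diagram together with small triangles in the winding region. Counting these contributions identifies the differential on $\CFp(S^3_q(K), \spinc_m)$ with that on the subquotient $C\{\max(i, j-m) \geq 0\}$ of $\CFKinf(K)$: the condition $i \geq 0$ encodes the $\CFp$ truncation (multiplicity at $z$ is non-negative), while $j \geq m$ encodes the constraint that the relevant strand passes on the appropriate side of $w$ in the winding region.

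Step 3 (grading shift): Compute the absolute grading shift by applying the definition of the $\Q$--grading through $\SpinC$ cobordisms. For the $2$--handle cobordism $W$ from $S^3$ to $S^3_q(K)$ equipped with $\spinct_m$, one has $\chi(W) = 1$, $\sigma(W) = 0$ (since $q > 0$), and completing to a closed manifold gives $c_1(\spinct_m)^2 = -(2m-q)^2/q$. Plugging into
\[ s(q,m) = \frac{c_1(\spinct_m)^2 - 2\chi(W) - 3\sigma(W)}{4} + \text{const}, \]
and normalizing against the known grading of $S^3$, yields the stated formula $s(q,m) = (-(2m-q)^2 + q)/(4q)$. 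The main obstacle is Step 2: verifying that the surgery differential receives no extraneous contributions beyond those corresponding to knot Floer disks. This requires careful control over the domains of disks in the stretched diagram; the $q \geq 2g - 1$ hypothesis is used precisely to rule out disks whose domains would have negative multiplicities outside the winding region, ensuring the neck-stretching degeneration is clean and the disk count matches exactly.
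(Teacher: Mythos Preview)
This theorem is not proved in the present paper; it is quoted from Ozsv\'ath--Szab\'o~\cite{Knots} (their Theorem~4.4) as background, so there is no in-paper argument against which to compare your sketch. That said, your outline does follow the broad shape of the original proof, with two issues worth flagging.

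First, there is a genuine error in Step~3. For the $2$--handle cobordism $W$ from $S^3$ to $S^3_q(K)$ with framing $q>0$, the intersection form on $H_2(W;\Z)\cong\Z$ is the form $(q)$, which is positive definite; hence $\sigma(W)=1$, not $0$. Correspondingly $c_1(\spinct_m)^2 = (2m-q)^2/q$ with a positive sign. With the correct values $\sigma=1$, $\chi=1$, $c_1^2=(2m-q)^2/q$, the degree shift of the cobordism map and the stated $s(q,m)$ can indeed be reconciled (after accounting for the direction of the map and the Maslov index of the canonical small triangle), but the numbers you wrote down do not produce the claimed formula.

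Second, Step~2 is not how the original argument runs. Ozsv\'ath and Szab\'o do not stretch a neck or invoke Gromov compactness here; they work with a Heegaard triple $(\Sigma,\alphas,\betas,\gammas)$ and a holomorphic-triangle chain map. The hypothesis $q\ge 2g-1$ is used to guarantee that every generator of $\CFp(S^3_q(K),\spinc_m)$, for $|m|\le (q-1)/2$, is supported in the winding region, where a canonical ``nearest-point'' small triangle furnishes a bijection with generators of $C\{\max(i,j-m)\ge 0\}$. The matching of differentials is then an energy/filtration argument showing this triangle-counting chain map is an isomorphism on associated graded, hence a filtered equivalence---no degeneration analysis is involved. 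Your neck-stretching picture is not obviously wrong, but it is not the established proof and would require its own analytic justification.
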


 %%%%%%%SECTION%%%%%%%%%%%%%%
\section{Computing $\bar{d}(S^3_{p^2}(T_{p-1,p} \cs J_p \cs J_p^r)$}\label{sectioncompute}
 In this section we turn to the computation of the concordance obstruction for our family of knots.  Having identified the branched double cover of these knots with the manifolds obtained by $p^2$ surgery on the knot $$L_p := T_{p-1,p} \cs J_p \cs J_p^r  = T_{p-1,p} \cs  \frac{3p-1}{2} D(T_{2,3})$$ (see Proposition~\ref{surgerydiagram}) we will accomplish this task by analyzing $\CFKinf(L_p)$ and using Theorem \ref{thmcompute} to extract the correction terms necessary for $\bar{d}$.

Before going further, we describe some aspects of the computation in more detail.  Theorem \ref{thmcompute} allows us to  compute $\HFp(S^3_{p^2}(L_p),\spinc)$ completely in terms of $\CFKinf(L_p)$. Furthermore, a K{\"u}nneth theorem for knot Floer homology says that $\CFKinf(K_1\#K_2)\cong \CFKinf(K_1)\otimes \CFKinf(K_2)$.  Thus it suffices, in principle, to know $\CFKinf(D(T_{2,3}))$ and  $\CFKinf(T_{p-1,p})$.  The former invariant was studied in \cite{Doubling}, while the latter was determined in \cite{oz:hfk-lens}. While this appears to complete the picture, two issues make the situation more subtle. 

The first issue is that the size of the chain complex grows very quickly with $p$; a minimal generating set for $\CFKinf(L_p)$ as an $\F[U,U^{-1
}]$ module consists of $(2p-3)(15)^{\frac{3p-1}{2}}$ elements.  Such a complex is somewhat unwieldy to work with in the context of Theorem \ref{thmcompute}.   The second issue is that the results of \cite{Doubling} leave an ambiguity in the full nature of $\CFKinf(D(T_{2,3}))$: the results of \cite{Doubling} only determine differentials in $\CFKinf$ that drop one or the other of the filtration indices, and not differentials that drop both.

In light of this, we found it convenient to distill only the properties of $\CFKinf(L_p)$ necessary for the computation of a single  $\bar{d}$ invariant which, by Theorem \ref{independence}, is sufficient for the topological applications.  As it turns out, this requires far less information than the full $\Z\oplus\Z$--filtered chain homotopy type of $\CFKinf(L_p)$, and we hope that similar methods can be exploited to compute the correction terms in other situations when chain complexes become complicated or are only partially known.

\subsection{\bf Understanding $CFK^\infty(T_{p-1,p})$}  With the general strategy in place, we begin by studying the  complex of the torus knot $T_{p-1,p}$.  The filtered chain homotopy type of this complex is determined by the main theorem  in \cite{oz:hfk-lens}.  For the specific case of $T_{p-1,p}$, the complex can also easily be understood from the definition, as $T_{p-1,p}$ admits a genus one doubly-pointed Heegaard diagram.  For such knots, the methods developed in Section $6.2$ of \cite{Knots} (and further exploited in \cite{goda-matsuda-morifuji:hfk11}) allow one to compute the differential on $\CFKinf$ via the Riemann mapping theorem.

For our purposes, it will be most convenient to use Theorem $1.2$ of \cite{oz:hfk-lens} to understand the structure of $\CFKinf(T_{p-1,p})$.   While the full $\Z\oplus\Z$--filtered chain homotopy type of $\CFKinf(T_{p-1,p})$ can be determined from this theorem, it is stated as a result about the knot Floer homology groups.  Recall that these groups, denoted $\HFKa_*(K,j)$ are the associated graded groups of the subquotient complex \newline $C\{i=0\}\subset\CFKinf(K)$, equipped with the $\Z$--filtration $[\x,0,j]\rightarrow j$.

To state the theorem, let $$\Delta_K(t)=\sum_{j=-g}^{j=g} a_j \cm t^j$$ denote the  Alexander polynomial, normalized so that $a_j=a_{-j}$ and $\Delta_K(1)=1$.  Let $$n_{-k}<...<n_k,$$
denote the sequence of integers, $j$, for which $a_j\ne 0$.  Under the assumption that some positive framed surgery on $K$ produces a lens space (or, more generally, an L-space), Theorem $1.2$ of \cite{oz:hfk-lens}  indicates that this sequence determines the knot Floer homology groups.

More precisely, we have $\HFKa(K,j)=0$ unless $j=n_s$ for some $s$,  in which case
$\HFKa(K,n_s)\cong \F$.  Moreover, the homological grading of $\HFKa(K,n_s)$ is given by an integer $\delta_s$, determined by the formulas (for $l\ge 0$): 
\begin{eqnarray} \delta_{k-2l}&=& -2 \sum_{j=0}^{2l-1} (-1)^j \cm n_{k-j},\\
\delta_{k-2l-1}&=& \delta_{k-2(l+1)}  + 1.\end{eqnarray}
Note that we have expressed \ons's recursive formula for $\delta_i$ in closed form, and that $\delta_k=0$ since the summation in this case is vacuous.

We now use this theorem to extract the properties of $\CFKinf(T_{p-1,p})$ needed for our application.   To be more precise, when we refer to {\em the} chain complex $\CFKinf(K)$ of a given knot, we really refer to the $\Z\oplus\Z$--filtered chain homotopy type of $\CFKinf(K)$.  As such, we will always work with a representative for this type that is {\em reduced}, in the sense of \cite[Section 4]{RasThesis}.  This means that the differential on $\CFKinf(K)$ strictly lowers the $(i,j)$ filtration.   The existence of such a representative for any $\Z\oplus\Z$--filtered chain homotopy type follows in exactly the same manner as the proof of \cite[Lemma 4.5]{RasThesis}.

For the reader unfamiliar with the Floer homology of torus knots, it may be enlightening to skip the proof of the following proposition on first reading and proceed to the discussion immediately following it.   There, we give a more conceptual description of the chain complexes that guided the statement and proof of the proposition.  
\begin{prop}\label{gradinglemmaprime} Consider the chain complex $\CFKinf_*(T_{p-1,p})$, for $p$ odd.  Then 
\begin{itemize}
\item Any chain $[\x,i,j]\in \CFKinf_0(T_{p-1,p})$  satisfies $i+j\ge \frac{p^2-2p+1}{4}$.
\item Any chain $[\x,i,j]\in \CFKinf_1(T_{p-1,p})$  satisfies $i+j\ge \frac{p^2-1}{4}$.
\item There exists a cycle $[\x, \frac{p^2-4p+3}{8}, \frac{p^2-1}{8}]$ that is homologous to a generator of $\HFinf_0(S^3)\cong \F$.
\end{itemize}
\end{prop}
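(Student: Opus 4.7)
The plan is to exploit that $T_{p-1,p}$ is a positive L-space knot, so Theorem~1.2 of \cite{oz:hfk-lens} gives an explicit staircase description of the reduced complex $\CFKinf(T_{p-1,p})$: it is freely generated over $\F[U,U^{-1}]$ by elements $\x_{-k},\ldots,\x_k$ (with $k = p-2$), where $\x_s$ sits at bifiltration $(0,n_s)$ in homological grading $\delta_s$. Any chain is then of the form $U^m\x_s$, at position $(-m,n_s-m)$ in grading $\delta_s-2m$. Setting the grading equal to $0$ (respectively $1$) forces $m=\delta_s/2$ with $\delta_s$ even (resp.\ $m=(\delta_s-1)/2$ with $\delta_s$ odd), yielding $i+j = n_s-\delta_s$ (resp.\ $n_s-\delta_s+1$). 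Thus the first two bullets are equivalent to the identities
\begin{align*}
\min_{s\,:\,\delta_s\text{ even}}(n_s-\delta_s) &= \frac{(p-1)^2}{4}, \\
\min_{s\,:\,\delta_s\text{ odd}}(n_s-\delta_s+1) &= \frac{p^2-1}{4}.
\end{align*}

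The first step is to read off $n_s$ from the factorization $\Delta_{T_{p-1,p}}(t) = \frac{(t^{p(p-1)}-1)(t-1)}{(t^{p-1}-1)(t^p-1)}$. Rearranging gives $\Delta_{T_{p-1,p}}(t)\cdot(1+t+\cdots+t^{p-1}) = 1+t^{p-1}+t^{2(p-1)}+\cdots+t^{(p-1)^2}$, and an inductive coefficient comparison yields the alternating gap pattern $g_{2m-1}=m$, $g_{2m}=p-1-m$ for $m=1,\ldots,k$, where $g_j := n_{k-j+1}-n_{k-j}$ denotes the $j$-th consecutive gap from the top. Telescoping gives $n_{k-2l'} = \frac{(p-1)(p-2-2l')}{2}$, and substituting this along with the gap identity into $\delta_{k-2l'} = -2\sum_{j=0}^{2l'-1}(-1)^j n_{k-j}$ yields $\delta_{k-2l'} = -l'(l'+1)$. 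The quantities $n_s-\delta_s$ then become convex quadratics in $l'$: the even-$\delta$ expression is minimized at either $l'=(p-3)/2$ or $l'=(p-1)/2$ with value $(p-1)^2/4$, while the odd-$\delta$ expression is minimized uniquely at $l'=(p-3)/2$ with value $(p^2-1)/4-1$ (hence $(p^2-1)/4$ after the $+1$ shift).

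For the third bullet, the even-case minimizer $l'=(p-3)/2$ corresponds to $s=1$, with $n_1=(p-1)/2$ and $\delta_1=-\frac{(p-1)(p-3)}{4}$. The chain $U^{\delta_1/2}\x_1$ thus sits at $(-\delta_1/2,\,n_1-\delta_1/2) = \bigl(\frac{p^2-4p+3}{8},\,\frac{p^2-1}{8}\bigr)$ in grading~$0$. In the staircase, the generators with $\delta_s$ even are cycles while the differentials only affect the odd-$\delta_s$ generators (each mapping to a combination of its two adjacent even-$\delta$ neighbors), so $\x_1$ is a cycle. Since $\HFinf(S^3)\cong \F[U,U^{-1}]$ is one-dimensional in each even grading, the class $[\x_1]\in \HFinf_{\delta_1}(S^3)$ coincides with $U^{-\delta_1/2}[\x_k]$, where $[\x_k]$ generates $\HFinf_0(S^3)$; hence $[U^{\delta_1/2}\x_1] = [\x_k]$ is the desired generator of $\HFinf_0(S^3)$.

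The main obstacle is establishing the alternating gap pattern $g_{2m-1}=m$, $g_{2m}=p-1-m$, which requires an inductive analysis of the coefficient identity for $\Delta_{T_{p-1,p}}(t)\cdot(1+t+\cdots+t^{p-1})$. All subsequent steps are then routine closed-form computations.
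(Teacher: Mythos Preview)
Your proposal is correct and follows essentially the same route as the paper for the first two bullets: both arguments use the L-space structure via \cite{oz:hfk-lens} to determine $n_s$ and $\delta_s$ explicitly, obtain the same closed forms $n_{k-2l}=g-l(p-1)$ and $\delta_{k-2l}=-l(l+1)$, and reduce the statements to minimizing the same quadratic in $l$. The only cosmetic difference is that the paper reads off $n_s$ from a direct algebraic rewriting of $\Delta_{T_{p-1,p}}(t)$, whereas you extract the same data via the gap pattern $g_{2m-1}=m$, $g_{2m}=p-1-m$ from the identity $\Delta_{T_{p-1,p}}(t)\cdot(1+\cdots+t^{p-1})=1+t^{p-1}+\cdots+t^{(p-1)^2}$.

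The genuine difference is in the third bullet. The paper does \emph{not} invoke the staircase differential directly; instead it proves that every even-graded generator represents a nonzero class in $\HFinf(S^3)$ via a diagram chase, comparing the inclusions $C\{\max(i,j)=n_{k-2l}\}\hookrightarrow C\{\max(i,j)\ge n_{k-2l}\}$ with the maps $\HFa\to\HFp$ and $\HFinf\to\HFp$ for the L-space $S^3_{p^2-p+1}(T_{p-1,p})$. Your argument is shorter: you use that for an L-space knot the reduced $\CFKinf$ is a staircase, so the even-$\delta$ generators are cycles which are all homologous and hence generate the one-dimensional $\HFinf$ in their grading. This is valid and is by now the standard way to handle the third bullet, but you should be aware that Theorem~1.2 of \cite{oz:hfk-lens} as stated only gives $\HFKa$; the staircase form of the full differential on $\CFKinf$ requires one more (standard) step, namely that the constraint $H_*(C\{i=0\})\cong\F$ together with the alternating one-dimensional $\HFKa$ groups forces the differential. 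The paper's diagram-chase avoids making this explicit, at the cost of a longer argument.
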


\begin{proof}
Since $p^2-p+1$ surgery on $T_{p-1,p}$ is a lens space \cite{Moser1971}, we can employ \ons's theorem to compute the knot Floer homology groups.  To begin, recall that  $$\Delta_{T_{p-1,p}}(t)=t^{-g}\cm\frac{(t^{(p-1)p}-1)(t-1)}{(t^{p-1}-1)(t^{p}-1)},$$
where $g=\frac{(p-2)(p-1)}{2}$ denotes the Seifert genus of $T_{p-1,p}$.  This can be rewritten as
$$\Delta_{T_{p-1,p}}(t)=  t^{g}\cm\lbrace 1-  \sum_{l=1}^{p-2}    t^{-p(l-1)-1} + \sum_{l=1}^{p-2} t^{-l(p-1)}\rbrace . $$  (To demonstrate  this equality, multiply by $(t^{p-1}-1)(t^p-1)$.  The first sum, when multiplied by $(t^{p} -1)$, becomes telescoping and collapses, as does the second sum when multiplied by $(t^{p-1}-1)$.)

It follows that the sequence of integers corresponding to $t$ powers with non-vanishing coefficient can be expressed as the union of two sequences:
$$
\begin{array}{lll} n_{k-2l} & =  g- l(p-1) & \ \  l=0,...,p-2\\
n_{k-2l-1} &=  g- lp-1 & \ \ l=0,...,p-3,
\end{array}
$$
Here, $k$ is easily seen to equal $p-2$, as there are $2p-3$ non-vanishing coefficients in $\Delta_{T_{p-1,p}}(t)$.

Straightforward algebra determines the gradings $\{\delta_s\}$ from the $\{n_s\}$:
$$
\begin{array}{lll} \delta_{k-2l} & = -l(l+1) &\ \ l=0,...,p-2 \\
\delta_{k-2l+1} & = -l(l+1)+1 & \ \ l=1,...,p-2.
\end{array}
$$

As above, $\{n_s,\delta_s\}$ determine the knot Floer homology groups.  
Up to $\Z\oplus\Z$--filtered chain homotopy equivalence, these groups generate $\CFKinf(K)$ as an $\F[U,U^{-1}]$--module.  That is, we have identifications 
\begin{equation}\label{eq:basis} \HFKa_*(K,j)\cong C_*\{0,j\}\cong C_{*-2n}\{-n,j-n\},\end{equation} for all $j,n,*$, where the first isomorphism holds since we work with a reduced representative for $\CFKinf$, and  the second isomorphism is induced by the action of $U^{n}$.  Now a basis for the knot Floer homology groups yields chains $$[\x_s, 0, n_s]   \ \ \ \ \ \ \ \ s=-p+2,...,p-2$$ satisfying $$gr([\x_s,0,n_s])=\delta_s.$$

Equation \eqref{eq:basis} shows that these chains  generate $\CFKinf(T_{p-1,p})$. Since we wish to understand $\CFKinf_0$, it suffices to identify the chains $U^n\cm [\x_s,0,n_s]$
with grading zero.  We have
$$gr(U^n\cm [\x_s,0,n_s])=-2n+\delta_s,$$ from which it follows that  $$U^{-\frac{l(l+1)}{2}} \cm [\x_{k-2l},0,n_{k-2l}] \ \ \ \ \ \ \ \ l=0,...,p-2$$generate $\CFKinf_0$.  But these chains satisfy $$i+j=l(l+1)+n_{k-2l}= g+l^2-lp+2l.$$  Recalling that $g=\frac{(p-2)(p-1)}{2}$, it follows that the sum is bounded below (as $l$ varies) by $\frac{p^2-2p+1}{4}$, as claimed.

The second part of the lemma follows in the same manner.  This time, we find that $U^{-\frac{l(l+1)}{2}}[\x_{k-2l+1},0,n_{k-2l+1}]$ generate $\CFKinf_1$, with filtration values satisfying:
$$i+j=l(l+1)+n_{k-2l+1}= g+l^2-l(p-1)+p-1.$$ 
Here, the minimum value of $\frac{p^2-1}{4}$ occurs when $l=\frac{p-1}{2}$.

As for the last part of the proposition, we claim that every chain in $\CFKinf_{ev}(T_{p-1,p})$ is non-trivial in $\HFinf(S^3)$.  Granting this, the proof is finished: the chain in $\CFKinf_0$ corresponding to $l=\frac{p-3}{2}$ is easily seen to have the desired filtration values. 

To prove the claim, first note that it is enough to prove it for $C_{ev}\{i=0\}$; that is, for the chains in $\CFKinf_{ev}$  identified with the even graded knot Floer homology groups.  This follows from the action of $\F[U,U^{-1}]$ on $\CFKinf$.  Hence it remains to show that each of the  $p-1$ chains above,  $[\x_{k-2l},0,n_{k-2l}]$, represent non-trivial classes in $\HFinf(S^3)\cong \F[U,U^{-1}]$.   

To see this, pick any $[\x_{k-2l},0,n_{k-2l}]$ and consider the diagram of chain complexes and chain maps 
{\small $$\begin{CD} 
 @. @.  \CFKinf(T_{p-1,p})\\
 @. @. @VV\pi V\\
<[\x_{k-2l},0,n_{k-2l}]> @>i_1>> C\{\mathrm{max}(i,j)=n_{k-2l}\} @>i_2>> C\{\mathrm{max}(i,j)\ge n_{k-2l}\} 
\end{CD}$$}

Here, the lower left complex is the complex generated by $[\x_{k-2l},0,n_{k-2l}]$, $i_1$ is its inclusion into the subquotient complex indicated, $i_2$ is the subsequent inclusion into the quotient complex, and $\pi$ is the projection of $\CFKinf$ onto the quotient.

Taking homology, we claim this diagram becomes

$${\small \begin{CD} 
 @. @.  \F[U,U^{-1}]\\
 @. @. @VV\pi_* V\\
\F @>(i_1)_*>> \F @>(i_2)_*>> \F[U,U^{-1}]/U\cm\F[U]
\end{CD} }$$
where $(i_1)_*$ is an isomorphism,  $(i_2)_*$ is an injection, and  $\pi_*$ is surjective.

Write $N = p^2-p+1$.  That $(i_2)_*$ is injective follows from the fact that $i_2$ is chain homotopic, up to an overall grading shift, to the inclusion $$ \CFa(S^3_N(T_{p-1,p})),\spinc_{n_{k-2l}})\hookrightarrow \CFp(S^3_N(T_{p-1,p})),\spinc_{n_{k-2l}}),$$  by Theorem \ref{thmcompute} (since $\CFa$ is the kernel of $U$), together with the fact that $S^3_N(T_{p-1,p})$ is an L-space (and hence the map is isomorphic to the corresponding map $\HFa(S^3)\rightarrow \HFp(S^3)$, up to a grading shift).  

Similar considerations show that $\pi_*$ is surjective; this time Theorem \ref{thmcompute}  shows that  $\pi_*$ is chain homotopic to the map $\HFinf(S^3_N(T_{p-1,p}))\rightarrow \HFp(S^3_N(T_{p-1,p}))$,  which, since $S^3_N(T_{p-1,p})$ is an L-space, is isomorphic to the corresponding map for $S^3$.   

Finally, to see that $(i_1)_*$ is injective (and therefore an isomorphism), we first note that $ [\x_{k-2l},0, n_{k-2l}]$ has strictly larger grading than every other chain in $C\{\mathrm{max}(i,j)=n_{k-2l}\} $, and hence cannot become a boundary under $i_1$.  Similarly,  $\partial \circ i_1\equiv 0$  for grading reasons: If $l>0$, then all other chains have grading at least $3$ less than $\x_{k-2l}$, and hence there are no non-trivial differentials emanating  from $\x_{k-2l}$.  For $l=0$, $\partial i_1(\x_{k})=0$ as well, since  $i_1(\x_k)$ is the only chain in $C\{\mathrm{max}(i,j)=n_{k}\} $ with grading zero, but the homology of this complex is isomorphic to $\HFa(S^3)$ (which is supported in grading zero).

By tracing the the homology class generated by $[\x_{k-2l},0,n_{k-2l}]$ into $$H_*(\CFKinf(T_{p-1,p}))\cong \HFinf(S^3)\cong \F[U,U^{-1}],$$ through the second diagram, we see that it represents a non-trivial class, as claimed.  This completes the proof of the proposition.

 \end{proof}

The structure of $\CFKinf(T_{p-1,p})$ is perhaps best understood through an example, which we include for the reader's convenience.   Figure~\ref{figure45} illustrates a specific subcomplex of $CFK^\infty(T_{4,5})$, shown in the $(i,j)$--filtration plane.  We denote this subcomplex by $C(4,5)$.   Letters represent chains (over $\F=\Z/2\Z$) and an arrow between letters indicates that the terminal chain appears in the boundary of the initial.  The element represented $a$ is at filtration level $(-3,3)$ and has grading $-6$.  The full complex  $CFK^\infty(T_{4,5})$ is generated by $C(4,5)$ as an $\F[U,U^{-1}]$--module; that is, $CFK^\infty(T_{4,5}) = C(4,5) \otimes_\F \F[U,U^{-1}]$.  Thus the full complex has a copy of $C(4,5)$ corresponding to each integer, with each copy specified by the filtration $(n,n)$ of the chain coming from the translate of $d$.  The transformation $U^k$ acts on the total complex by translation by $(-k,-k)$.

 \begin{figure}[h]
 \psfrag{a}{$a$}
 \psfrag{b}{$b$}
 \psfrag{c}{$c$}
 \psfrag{d}{$d$}
 \psfrag{e}{$e$}
 \psfrag{f}{$f$}
 \psfrag{g}{$g$}
\fig{.4}{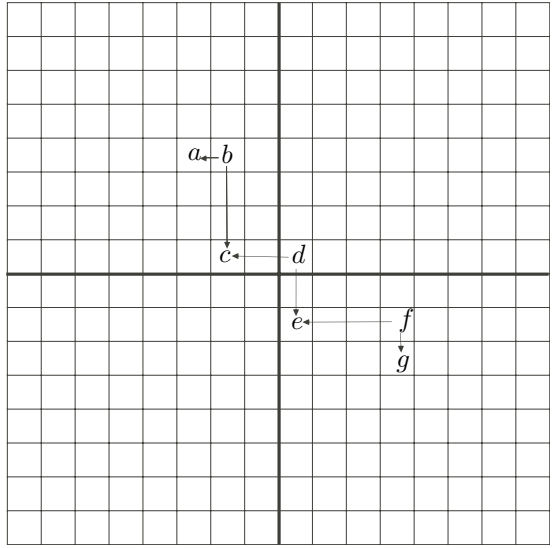}
\caption{A $\Z\oplus\Z$--filtered subcomplex of  $CFK^\infty(T_{4,5})$ that freely generates it as an $\F[U,U^{-1}]$ module.   }\label{figure45}
\end{figure}

The chain complex for the $(p-1,p)$ torus knot is similar.  Again, there is a distinguished subcomplex generating $\CFKinf(T_{p-1,p})$ whose shape resembles a staircase.   Instead of $7$ chains, the general staircase is comprised of $2p-3$ chains.   The first step down  has length $p-2$. Subsequent steps decrease in length by one until arriving at the final step, whose length is one.  The width of the steps follows a similar pattern, beginning with an arrow of width one. As one travels down the staircase, subsequent arrows increase in width by one.  That this subcomplex generates $\CFKinf(T_{p-1,p})$ as an $\F[U,U^{-1}]$--module follows from Theorem $1.2$ of \cite{oz:hfk-lens}.

Returning to the special case of $T_{4,5}$, we point out that the homology of the subcomplex $C(4,5)$ is $\F$, generated by $a$ (or $c$, $e$, or $g$). Thus we have $$H_*(\CFKinf(T_{p-1,p})) \cong \F[U,U^{-1}],$$ as expected ($\CFKinf$ is, after all, a filtered version of $\CFinf(S^3)$).  The grading of $e$  was determined in the course of the  proof of the preceding lemma (where $e$ was called $[x_{-1},0,-2]$).  There, the grading was shown to be  $\delta_{-1}=-6$. Since arrows decrease the grading by one, this determines the grading on the complex completely. 

 An alternative, and often convenient, method for determining the grading follows from the observation that $U^{-3}\cm a$ generates $H_*(C\{i=0\})\cong \HFa_*(S^3)\cong \F.$ Since this latter group is supported in degree zero, and since $U$ carries a grading of $-2$, it follows that $a$ has grading $-6$.

\subsection{Example: computing $d(S^3_{25}(T_{4,5}), \spinc_m)$} \label{subsec:d-example}  To indicate the general route to the correction terms through knot Floer homology, we now compute $d$--invariants for surgery on the $(4,5)$ torus knot.  We restrict our attention to $\SpinC$ structures, $\spinc_m$, for $m=0,5$ and $10$.

According to Theorem~\ref{thmcompute}, in order to find $ d(S^3_{25}(T_{4,5}), \spinc_0)$ we consider the quotient complex  $C\{ \mathrm{max}(i,j)\ge0\}$.  The homology of this quotient is isomorphic to $\F[U,U^{-1}]/U\cm \F[U]$, with $1$  represented by the cycle $a$ (or any cycle homologous to it).  Since $U^{\pm} $ carries a  grading of $\mp 2$, it follows that $a$ represents the element of least grading in $C\{ \mathrm{max}(i,j)\ge0\}$ that is in the image of $U^k$ for all $k\ge 0$.  As a chain in $\CFinf(S^3)$, the discussion above showed that $gr(a)=-6$.  Viewed as a chain in $\CFp(S^3_{25}(T_{4,5}), \spinc_0)$ under the isomorphism of Theorem \ref{thmcompute}, the grading of $a$ must be shifted down by 
$\frac{  -(2m-q)^2 +q}{4q}$, where $m=0$ and $q = 25$. Simplifying, we see that   
$$ d(S^3_{25}(T_{4,5}), \spinc_0) = -6 - \left(\frac{  -(0-25)^2 +25}{4\cm25} \right)= 0.$$

The same approach calculates $ d(S^3_{25}(T_{4,5}), \spinc_5)$. Here we examine the quotient complex $C\{ \mathrm{max}(i,j-5)\ge0\}$.  The only chains in $C(4,5)$ contained in this quotient are $d,e,f,$ and $g$.  By themselves, these chains do not yield non-trivial homology classes, as $e$ is the boundary of $d$ which, in turn, is homologous to $g$.   Similarly, the part of $U^{-1}\cm C(4,5)$ contained in $C\{ \mathrm{max}(i,j-5)\ge0\}$ does not carry non-trivial homology classes.  On the other hand,   $U^{-2}\cm C(4,5)$ is entirely contained within $C\{ \mathrm{max}(i,j-5)\ge0\}$, and hence its  homology contributes to the homology of the quotient.  Indeed, we have $$H_*( C\{ \mathrm{max}(i,j-5)\ge0\})\cong \F[U,U^{-1}]/U^{-1}\F[U], $$ and a representative for the class $U^{-2}$ is provided by the cycle $U^{-2}\cm a$.   As a chain in  $\CFinf(S^3)$ we have $gr(U^{-2}a)=4+gr(a)=4-6=-2$.  Shifting this grading by $s(q,p)$ with $p=5,q=25$ yields  
     
     $$ d(S^3_{25}(T_{4,5}), \spinc_5) = -2 - \left(\frac{  -((2)(5)-25)^2 +25}{4\cm25} \right)= 0.$$

Finally, we perform the same analysis to determine $ d(S^3_{25}(T_{4,5}), \spinc_{10})$.  The first translate of $C(4,5)$ that generates non-trivial homology classes in the relevant quotient  is  $U^{-3}C(4,5)$, 
and we see that $$C\{ \mathrm{max}(i,j-10)\ge0\}\cong \F[U,U^{-1}]/U^{-2}\F[U],$$ with $U^{-3}$ represented by 
the chain $U^{-3}\cm a$.   Applying the shift yields  
$$ d(S^3_{25}(T_{4,5}), \spinc_{10}) =gr(U^{-3}a) - \left(\frac{  -((2)(10)-25)^2 +25}{100} \right)= 6-6+0=0.$$

That all these turn out to be $0$ is expected, since $S^3_{25}(T_{4,5})$ is the $2$--fold branched cover of a smoothly slice knot and hence bounds a rational  homology ball. (The slice knot giving rise to $S^3_{25}(T_{4,5})$ through its branched double cover is the knot obtained by our construction, replacing each Whitehead double in $K_5$ with an unknot.)

\subsection{Dealing with the doubled summand}
As mentioned in the beginning of  this section, we will understand the Floer complex of $L_p = T_{p-1,p} \cs  \frac{3p -1}{2} D(T_{2,3})$ by a K{\"u}nneth-type theorem for connected sums.  To use this, we must understand the key aspects of $CFK^\infty(D(T_{2,3}))$.  We remind the reader that $CFK^\infty$ denotes a particular reduced chain complex representing the $\Z\oplus \Z$--filtered chain homotopy type, and that $D(K)$ is the untwisted, positive-clasped, Whitehead double of $K$.  
We have an analogue of Proposition \ref{gradinglemmaprime} for the Whitehead double.

\begin{prop}  \label{double} The chain complex $\CFKinf_*(D(T_{2,3}))$ satisfies the following:
\begin{itemize}  \item Any chain $[\x,i,j]\in \CFKinf_0(D(T_{2,3}))$ satisfies $i+j\ge 1$.
\item Any chain $[\x,i,j]\in \CFKinf_1(D(T_{2,3}))$ satisfies $i+j\ge 2$.
\item There exist cycles $[\x,0,1],[\y,1,0] \in \CFKinf_0(D(T_{2,3}))$ that are homologous to a generator of $\HFinf_0(S^3)\cong \F$.
\end{itemize}  
\end{prop}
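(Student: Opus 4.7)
The plan is a close parallel to the proof of Proposition \ref{gradinglemmaprime} above. Hedden's computation in \cite{Doubling} fixes a reduced $\Z\oplus\Z$--filtered chain complex representing $\CFKinf(D(T_{2,3}))$, specifying its finite list of generators together with their $(i,j)$--filtration levels and Maslov gradings, as well as all differentials that strictly lower just one of the two filtration coordinates. Only the ``square'' differentials (those that strictly lower both coordinates) remain ambiguous, and the three claims will be seen to be insensitive to this ambiguity.

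For the first two bullets I would argue uniformly. Any chain in $\CFKinf_d(D(T_{2,3}))$ is an $\F$--linear combination of elements of the form $U^k\cdot g$, where $g$ ranges over the finite set of generators of the reduced model and $k = (\gr(g) - d)/2$ is forced by the grading constraint. Such an element lies at filtration position $(i_g - k,\, j_g - k)$, so $i+j = i_g + j_g - \gr(g) + d$. The inequality $i+j \ge 1$ at $d = 0$ and the inequality $i+j \ge 2$ at $d = 1$ therefore both reduce to a single numerical condition, $i_g + j_g \ge \gr(g) + 1$, to be checked on each generator of the appropriate parity. Reading off the explicit data in \cite{Doubling} verifies this inequality case by case.

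For the third bullet the key input is the ``staircase'' substructure of $\CFKinf(D(T_{2,3}))$ that parallels $\CFKinf(T_{2,3})$: the computation in \cite{Doubling} exhibits generators $\x$ at $(0,1)$ and $\y$ at $(1,0)$ in Maslov grading $0$, together with an intermediate generator at $(1,1)$ in grading $-1$ whose differential covers $\x + \y$. Any differential out of $\x$ or $\y$ would have to land on a generator in grading $-1$ with strictly smaller $(i,j)$, and the explicit generator list rules out any such target; in particular no square differential can emanate from $\x$ or $\y$, so both are cycles. To see that they each represent the generator of $\HFinf_0(S^3) \cong \F$, I would mimic \textit{verbatim} the diagram-chase from the last part of the proof of Proposition \ref{gradinglemmaprime}: project $\CFKinf(D(T_{2,3}))$ onto $C\{\max(i, j-1) \ge 0\}$ for $\x$ (and onto the analogous quotient with the two indices swapped for $\y$), use Theorem \ref{thmcompute} to identify this quotient with $\CFp$ of a sufficiently large positive surgery on $D(T_{2,3})$, and combine surjectivity of $\pi_*$ with injectivity of the inclusion of the one-chain complex into the subquotient to conclude that $[\x]$ (respectively $[\y]$) is sent to a non-zero class in $\HFinf_0(S^3)$.

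The main delicate point is the undetermined square differentials in \cite{Doubling}, and the plan is engineered to sidestep them: the bounds in the first two bullets depend only on the positions and gradings of the generators of the reduced model, and the corner cycles $\x$ at $(0,1)$ and $\y$ at $(1,0)$ have no candidate targets in grading $-1$ with both coordinates strictly smaller, so no square differential can originate from them. Consequently every statement in the proposition depends only on the portion of the data that \cite{Doubling} pins down.
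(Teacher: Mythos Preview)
Your approach to the first two bullets is essentially the paper's: both reduce to checking a single numerical inequality on the generators of the reduced model, using the explicit knot Floer homology groups computed in \cite{Doubling}.

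For the third bullet your plan has two real gaps. First, the claim that ``the explicit generator list rules out any such target'' is not correct: from $\x$ at $(0,1)$ in grading $0$ there are three generators at $(0,0)$ in grading $-1$ (coming from $\HFKa_{-1}(D(T_{2,3}),0)\cong\F^3$), so a vertical differential is not excluded by the generator list alone. (The grading of your ``intermediate'' generator is also off: a chain at $(1,1)$ whose boundary is $\x+\y$ must have grading $+1$, not $-1$.) You could repair this by using $H_*(C\{i=0\})\cong\HFa(S^3)\cong\F_{(0)}$ to pick out a specific vertical cycle at $(0,1)$, but that is extra work you have not written down.

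Second, and more seriously, the diagram chase you propose to import \emph{verbatim} from Proposition~\ref{gradinglemmaprime} relied on the surgered manifold being an L-space: that is what made $(i_2)_*$ (the map corresponding to $\HFa\to\HFp$) injective. Large surgeries on $D(T_{2,3})$ are \emph{not} L-spaces---the knot Floer homology has rank greater than one in the top Alexander grading---so this step of the chase does not go through as written, and verifying injectivity on the specific class you need would require essentially knowing the answer already.

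The paper sidesteps both issues with a much shorter argument. Forgetting the $j$--filtration, the subcomplex $C\{i\le 0\}\subset\CFKinf(D(T_{2,3}))$ is chain homotopy equivalent to $U^{-1}\CFm(S^3)$, whose homology $\F[U]$ (top class in grading $0$) injects into $\HFinf(S^3)$. Hence $C\{i\le 0\}$ already contains a cycle in grading $0$ representing the generator of $\HFinf_0(S^3)$; but the only grading--$0$ chains in $C\{i\le 0\}$ sit at filtration level $(0,1)$, so that cycle has the claimed form. The cycle at $(1,0)$ then comes for free from the filtered symmetry interchanging $i$ and $j$. No L-space hypothesis, no analysis of individual differentials, and no diagram chase are needed.
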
 

\begin{proof}
The proof relies on the results of \cite{Doubling}.   Applied to the Whitehead double,  Theorem $1.2$  of \cite{Doubling} shows that

% \begin{equation}\label{eq:mhthm}
 $$ \HFKa_*(D(T_{2,3}),j)\cong \left\{\begin{array}{ll} 
		\F_{(0)}^{2} \oplus\F_{(-1)}^{2}  & j=1\\
		\F_{(-1)}^{3} \oplus\F_{(-2)}^{4}& j=0\\
		\F_{(-2)}^{2} \oplus\F_{(-3)}^{2} & j=-1\\
	\end{array}\right.$$
%\end{equation}
The subscripts in the groups refer to the gradings.

Up to $\Z\oplus\Z$--filtered chain homotopy equivalence, a basis forÊ the above groups forms a basis for $\CFKinf(D(T_{2,3}))$Ê as an $\F[U,U^{-1}]$--module; see Equation \eqref{eq:basis}.Ê Henceforth  we let $CFK^\infty(D(T_{2,3}))$ denote this particular representative of the chain homotopy equivalence class.  Since the variable $U$ carries homological degree $-2$Ê and $\Z\oplus\Z$ filtration $(-1,-1)$, we see immediately that any chain $[\x,i,j]\in \CFKinf_{0}(D(T_{2,3}))$ satisfies $i+j\ge 1$.Ê  Indeed, the only chains in $\CFKinf_{0}(D(T_{2,3}))$ are supported in filtration levels $(1,0),(0,1)$, and  $(1,1)$.  This proves the first part of the proposition.  The second part follows similarly: the only chains in $\CFKinf_{1}$ are supported in filtration levels $(1,2),(1,1)$, and  $(2,1)$.

%{\bf OLD}To prove the last part, we consider the subquotient complex $C\{i=0\}\subset \Cdoub$.  This chain complex is chain homotopy equivalent to $\CFa(S^3)$, and hence satisfies $$H_*(C\{i=0\})\cong \F_{(0)}.$$
%The maps  $\HFa(S^3)\rightarrow \HFp(S^3)$ and $\HFp(S^3)\rightarrow \HFinf(S^3)$ are both injective, and hence the homology of $\HFa(S^3)$ surjects onto the summand $\HFinf_0(S^3)\cong \F$.  Thus the subquotient complex $C\{i=0\}$ contains a chain that generates $\HFinf_0(S^3)$.  As the only chains in $C\{i=0\}$ that have grading zero also satisfy $j=1$, the existence of the desired cycle $[\x,0,1]$ follows. 
%To prove that $[\y,1,0]$ exists, it suffices to recall that there is a $\Z\oplus\Z$--filtered chain homotopy equivalence between $\Cdoub$ and the complex obtained from it by interchanging the roles of $i$ and $j$, see~\cite[Section 3.5, Propositions 3.8, 3.9]{Knots}.
%{\bf END OLD}

%%%%%%CHUCK'S ADDITION%%%%%%

To prove the last part, consider the subcomplex $C\{i<-d\}\subset CFK^\infty$.  Forgetting the additional $\Z$-filtration induced by the knot, we have a chain homotopy equivalence $C\{i<-d\}\simeq U^d
\cm CF^-(S^3)$, where the latter is the subcomplex in the natural filtration of $CF^\infty(S^3)$ described in the first paragraph of Subsection \ref{subsec:HFKbackground}.  The map induced on homology by the inclusion $$\iota: U^d
\cm CF^-(S^3) \rightarrow \CF^\infty(S^3),$$
is easily seen to be injective; indeed, it is given by $$(\iota)_*
: U^{d+1}\cm \F[U] \hookrightarrow \F[U,U^{-1}].$$
Since $1\in \F[U,U^{-1}]$ is the unique class in $HF_{0}^\infty(S^3)$, it follows that the subcomplex $U^{-1}\cm CF^{-}(S^3)\simeq C\{i<1\}\subset CFK^\infty$ contains a cycle homologous to the generator of $HF^\infty_0(S^3)$.    It follows immediately from the description of the knot Floer homology given above, however, that the only chains in $$C\{i<1\}=C\{i\le0\}\subset CFK^\infty(D(T_{2,3}))$$ with homological grading zero have filtration level $(0,1)$.  Thus  there exists a cycle $[\x,0,1] \in \CFKinf_0(D(T_{2,3}))$ which is homologous to a generator of $\HFinf_0(S^3)\cong \F$, as claimed.  

To obtain the other cycle $[\y,1,0]$, it suffices to recall that there is a $\Z\oplus\Z$--filtered chain homotopy equivalence between $\Cdoub$ and the complex obtained from it by interchanging the roles of $i$ and $j$, see~\cite[Section 3.5, Propositions 3.8, 3.9]{Knots}.

\end{proof} 

\subsection{Essential properties of $\CFKinf_*(T_{p-1,p} \cs   \frac{3p -1}{2}  \doub) $}

The following theorem combines Propositions \ref{gradinglemmaprime} and \ref{double} to extract the key features of the chain complex for $L_p$.

\begin{theorem}\label{genthm}

\begin{enumerate}
\item There is an equivalence of $\Z\oplus \Z$--filtered chain complexes $$\CFKinf(T_{p-1,p} \cs   \frac{3p -1}{2}  \doub ) \simeq  \CFKinf(T_{p-1,p}) \otimes_{\Z[U,U^{-1}]} \Cdoub^{\otimes \frac{3p-1}{2}}.$$ 
\item Any chain $[\x,i,j]\in \CFKinf_0(T_{p-1,p} \cs   \frac{3p -1}{2}  \doub )$ satisfies $i + j \ge \frac{p^2+4p-1}{4}$.
\item There exists a cycle $[\x,\frac{p^2-1}{8}, \frac{p^2+8p-1}{8}]\in \CFKinf_0(T_{p-1,p} \cs   \frac{3p -1}{2}  \doub )$  whose homology class generates $\HFinf_{0}(S^3)\cong \F$.
\end{enumerate}
\end{theorem}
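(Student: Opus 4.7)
The plan is to combine the K\"unneth formula for knot Floer homology with the detailed structural information furnished by Propositions~\ref{gradinglemmaprime} and~\ref{double}. Part~(1) is the K\"unneth equivalence for $\CFKinf$ of connected sums established in \cite{Knots}, applied iteratively over $T_{p-1,p}$ and the $(3p-1)/2$ copies of $\doub$.

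For part~(2), the key observation is that $U$ shifts homological grading by $-2$ and each of $i$, $j$ by $-1$, so the quantity $(i+j)-g$ is invariant under the $\F[U,U^{-1}]$-action. In particular, for any chain arising from an $\HFKa$-generator $[\x,0,n_s]$ of grading $\delta_s$, we have $(i+j)-g = n_s - \delta_s$. The data
$$(\delta_{k-2l}, n_{k-2l}) = (-l(l+1),\, g - l(p-1)), \qquad (\delta_{k-2l-1}, n_{k-2l-1}) = (-(l+1)(l+2)+1,\, g - lp - 1)$$
extracted in the proof of Proposition~\ref{gradinglemmaprime}, with $g = (p-1)(p-2)/2$, yields after a short quadratic optimization the minima $\frac{(p-1)^2}{4}$ over even-index generators (attained at $l = (p-1)/2$ or $(p-3)/2$) and $\frac{p^2-5}{4}$ over odd-index generators (attained at $l = (p-3)/2$). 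Similarly, the generators tabulated in the proof of Proposition~\ref{double} give $\min(n_s - \delta_s) = 1$, uniformly in every grading. A basis element in grading $0$ of the tensor product decomposes as $[\x_0, i_0, j_0] \otimes \cdots \otimes [\x_N, i_N, j_N]$, where $N = (3p-1)/2$ and $\sum_k g(\x_k) = 0$, and filtration coordinates sum additively. Bounding each factor below and observing that $g(\x_0)$ can always be taken even (which gives the smaller per-factor bound) we obtain
$$
i + j \ge \tfrac{(p-1)^2}{4} + N \cdot 1 = \tfrac{p^2 + 4p - 1}{4},
$$
as claimed.

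For part~(3), I exhibit the cycle explicitly as a tensor product. Starting from the cycle $[\x_0, \frac{p^2-4p+3}{8}, \frac{p^2-1}{8}]$ in $\CFKinf_0(T_{p-1,p})$ from Proposition~\ref{gradinglemmaprime}(3) and the two cycles $[\x, 0, 1]$, $[\y, 1, 0]$ in $\CFKinf_0(\doub)$ from Proposition~\ref{double}(3), form
$$
[\x_0, \tfrac{p^2-4p+3}{8}, \tfrac{p^2-1}{8}] \otimes [\y, 1, 0]^{\otimes (p-1)/2} \otimes [\x, 0, 1]^{\otimes p}.
$$
The doubled-trefoil tensor factor count is $(p-1)/2 + p = (3p-1)/2 = N$, as required. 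Summing filtrations gives $i = \frac{p^2 - 4p + 3}{8} + \frac{p-1}{2} = \frac{p^2 - 1}{8}$ and $j = \frac{p^2 - 1}{8} + p = \frac{p^2 + 8p - 1}{8}$. Since each tensor factor represents a generator of $\HFinf_0(S^3) \cong \F$ in its summand, naturality of the K\"unneth map (together with the identification of $\HFinf_0$ of a connected sum of three-spheres with $\HFinf_0(S^3) \cong \F$) ensures that their product represents the generator of $\HFinf_0(S^3)$.

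The main obstacle is one of bookkeeping rather than principle: Propositions~\ref{gradinglemmaprime} and~\ref{double} explicitly state bounds only in gradings $0$ and $1$, whereas the tensor-product argument in part~(2) requires a bound in every grading of every factor. Resolving this entails revisiting those proofs to distill the $U$-invariant quantity $n_s - \delta_s$ from the underlying knot Floer homology data; its constancy on $U$-orbits immediately promotes the low-grading bounds to uniform ones, after which the remaining steps are elementary arithmetic.
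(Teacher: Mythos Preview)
Your proposal is correct, and parts~(1) and~(3) match the paper's proof essentially verbatim (the same tensor of cycles is constructed in part~(3)). For part~(2), your route is slightly cleaner than the paper's: the paper uses the $U$--action to normalize every tensor factor into grading $\{-1,0,1\}$ (with the number of $+1$'s equal to the number of $-1$'s), then invokes the bounds of Propositions~\ref{gradinglemmaprime} and~\ref{double} in gradings $0,1$ and derives the grading $-1$ bound by applying $U$. Your observation that $(i+j)-\gr$ equals $n_s-\delta_s$ and is constant on $U$--orbits bypasses this normalization entirely; since the gradings of the tensor factors sum to zero and filtrations add, summing the global minima of $n_s-\delta_s$ over the factors gives the bound in one stroke.

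One phrasing issue: the clause ``$g(\x_0)$ can always be taken even'' is misleading. You cannot force the torus-knot factor into even grading; the point is rather that the even-index minimum $(p-1)^2/4$ is the \emph{global} minimum of $n_s-\delta_s$ over all generators of $T_{p-1,p}$ (the odd-index minimum $(p^2-5)/4$ is at least as large for $p\ge 3$), so the bound $(i_0+j_0)-g_0 \ge (p-1)^2/4$ holds regardless of the grading of that factor. Once stated this way, the arithmetic you wrote goes through unchanged.
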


\begin{proof} According to  Theorem $7.1$ of \cite{Knots}, the chain complex associated to the connected sums of knots is the filtered tensor product of the complexes associated to the constituent knots.  The first statement is a direct application of this theorem.

The second statement follows from the first, together with propositions \ref{gradinglemmaprime} and \ref{double}.   More precisely, any chain in the filtered tensor product  can be decomposed as a sum $$\underset{l}\Sigma \  \psi_0^l \otimes \theta_1^l\otimes ... \otimes\theta^l_{\frac{3p-1}{2}}$$ where $\psi_0^l\in \CFKinf(T_{p-1,p})$ and $\theta^l_j \in \CFKinf(D(T_{2,3}))$.  Restricting attention to $\CFKinf_0$, we find that each term in the sum satisfies $$0=gr(\psi_0^l \otimes \theta_1^l\otimes ... \otimes\theta^l_{\frac{3p-1}{2}})= gr(\psi_0^l)+gr(\theta_1^l)+...+gr(\theta^l_{\frac{3p-1}{2}}).$$  We may further assume that each of the chains, $\psi_0^l$,  $\theta_j^l$ takes grading values in the set $\{-1,0,1\}$, with the added restriction that the number of chains with grading $+1$ is the same as the number with $-1$. This follows from the $U$ action on the tensor product, since we may write any individual $\theta$ with $|\gr(\theta)| > 1$ as $U^k \theta'$ where $\gr(\theta') \in \{-1,0,1\}$.  Propositions \ref{gradinglemmaprime} and \ref{double} give bounds for the filtration indices associated to any chain with grading $0,1$, and this yields bounds for chains with grading $-1$ as well.  Indeed, any chain $\CFK_{-1}(D(T_{2,3}))$ satisfies $i+j\ge 0$ and  any chain $\CFK_{-1}(T_{p-1,p})$ satisfies $i+j\ge \frac{p^2-9}{4}$, since such chains can be expressed as $U\cm \rho$ for $\rho\in \CFKinf_{1}$.   Additivity of the filtration under tensor product now implies the desired bound.

For the third statement, the desired generator is formed as the tensor product of the cycle $[\x,\frac{p^2-4p+3}{8},\frac{p^2-1}{8}] \in \CFKinf_0(T_{p-1,p})$ from Lemma \ref{gradinglemmaprime},  $p$ copies of the cycle  $[\x,0,1]\in \CFKinf_0(D(T_{2,3}))$ and $\frac{p-1}{2}$ copies of the cycle $[\y,1,0]\in \CFKinf_0(D(T_{2,3}))$ from Proposition \ref{double}. 
\end{proof}

\subsection{Computing $d$}

 The work of the previous subsections was aimed at the following non-vanishing theorem for the $\bar{d}$--invariant.

\begin{theorem}\label{t:dbarcomp}  With the notation above, $d(S^3_{p^2}(L_p), \spinc_0)  \le  -p-1 $ and  $d(S^3_{p^2}(L_p), \spinc_p) = -p+1$.  In particular, $\bar{d}(S^3_{p^2}(L_p), \spinc_p) \ge 2$.

\end{theorem}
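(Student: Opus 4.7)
The approach is to invoke Theorem~\ref{thmcompute} with $K=L_p$ and $q=p^2$ (valid since $q\ge 2g_3(L_p)-1=p^2$), identifying $\CFp_*(S^3_{p^2}(L_p),\spinc_m)$ with $C_{*+s(p^2,m)}\{\max(i,j-m)\ge 0\}$ where $C=\CFKinf(L_p)$, $s(p^2,0)=\tfrac{1-p^2}{4}$ and $s(p^2,p)=\tfrac{-p^2+4p-3}{4}$. As illustrated in Subsection~\ref{subsec:d-example}, $d$ is then the minimum $\CFKinf$-grading of a cycle in this quotient lying in the image of $H_*(\CFKinf)\cong\F[U,U^{-1}]$, shifted by $-s(p^2,m)$. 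The central object is the cycle $\x_0\in\CFKinf_0$ of Theorem~\ref{genthm}(3) at filtration $(\tfrac{p^2-1}{8},\tfrac{p^2+8p-1}{8})$ generating $\HFinf_0(S^3)$; then $U^k\x_0$ is a cycle at grading $-2k$ with filtration $(\tfrac{p^2-1}{8}-k,\tfrac{p^2+8p-1}{8}-k)$ representing $U^k\in\F[U,U^{-1}]$.

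For $d(S^3_{p^2}(L_p),\spinc_p)=-p+1$, I take $k=\tfrac{p^2-1}{8}$: then $U^k\x_0$ has filtration $(0,p)$, so $\max(i,j-p)=0$ and the cycle lies in the quotient at $\HFp$-grading $-2k-s(p^2,p)=-p+1$. Nontriviality $[U^k]\ne 0$ in $\HFp$ requires ruling out any cycle in the subcomplex $\{i<0,\,j<p\}$ representing $U^k$; the subcomplex forces $i+j\le p-2$, while Theorem~\ref{genthm}(2) (applied after $U^k$-multiplication, which shifts $i+j$ by $-2k$) gives $i+j\ge\tfrac{p^2+4p-1}{4}-2k=p$, excluding every such chain. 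Conversely, for $k>\tfrac{p^2-1}{8}$ the cycle $U^k\x_0$ itself already lies in $\{i<0,\,j<p\}$, forcing $[U^k]=0\in\HFp$. This caps the image of $\HFinf$ in $\HFp$ exactly at $k=\tfrac{p^2-1}{8}$ and yields $d(\spinc_p)=-p+1$ precisely.

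For the upper bound $d(S^3_{p^2}(L_p),\spinc_0)\le-p-1$, I take $k=\tfrac{(p+1)(p+3)}{8}$; then $U^k\x_0$ has filtration $(-\tfrac{p+1}{2},\tfrac{p-1}{2})$ and lies in $C\{\max(i,j)\ge 0\}$ with $\HFp$-grading $-2k-s(p^2,0)=-p-1$. The analogous nontriviality argument applies: the subcomplex $\{i<0,\,j<0\}$ requires $i+j\le-2$, while Theorem~\ref{genthm}(2) gives $i+j\ge-1$ at grading $-2k$. Combining the two computations yields $\bar d(S^3_{p^2}(L_p),\spinc_p)=d(\spinc_p)-d(\spinc_0)\ge(-p+1)-(-p-1)=2$.

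The main obstacle is the lower bound for $d(\spinc_p)$: one must verify that the image of $\HFinf$ caps precisely at $k=\tfrac{p^2-1}{8}$, and this rests on a sharp interplay between Theorems~\ref{genthm}(2) and~\ref{genthm}(3)---a single $U$-multiplication on the critical cycle at $k=\tfrac{p^2-1}{8}$ pushes its filtration past the threshold into the subcomplex, while the grading bound simultaneously excludes every alternative cycle at the same grading $-2k$.
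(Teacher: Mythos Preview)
Your argument is correct and follows essentially the same route as the paper's proof: invoke Theorem~\ref{thmcompute}, use the filtration bounds and the distinguished cycle of Theorem~\ref{genthm} to locate the critical $U$--power, and read off the grading after the shift $s(p^2,m)$. You actually go slightly further than the paper for $\spinc_p$: the paper's proof only establishes the lower bound $d(\spinc_p)\ge -p+1$ (by noting $U^{k+1}\x_0$ lands in the subcomplex), whereas your nontriviality argument---that Theorem~\ref{genthm}(2) forces every grading-$(-2k)$ chain to satisfy $i+j\ge p$ while the subcomplex $\{i<0,\,j<p\}$ requires $i+j\le p-2$---also yields the upper bound, giving the equality as stated; only the inequality is needed for $\bar d\ge 2$, so this extra step, while correct and natural, is a bonus.
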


\begin{proof}  By Theorem \ref{genthm}, all chains in $\CFKinf_0(L_p)$ have $(i,j)$--filtration indices satisfying $i + j \ge \frac{p^2+4p-1}{4}$.  This holds, in particular, for any cycle $\theta$ homologous to a generator of $\HFinf_0(S^3)$.   For each such cycle,   it follows that $U^\frac{p^2+4p+3}{8}\theta$ has filtration level satisfying $i + j \ge -1$.  Thus $U^\frac{p^2+4p+3}{8}\theta$ has one of $i$ or $j$ nonnegative, so that $U^\frac{p^2+4p+3}{8}\theta \in C\{ \mathrm{max}(i,j)\ge0\}$ and can be viewed as homology class in $\HFp(S^3_{p^2}(L_p),\spinc_0)$ under the identification given by Theorem \ref{thmcompute}.  By equivariance of $\CFKinf$ with respect to the action of $\F[U,U^{-1}]$, we see that $U^\frac{p^2+4p+3}{8}\theta$ is a cycle whose  homology class generates  the summand of $\HFinf(S^3)$ in grading   $$gr(U^\frac{p^2+4p+3}{8}\theta)=-2\cm \frac{p^2+4p+3}{8}+gr(\theta)=-\frac{(p+1)(p+3)}{4}+0.$$  Such a cycle is in the image of $U^i$ for all $i$, so that if we view it as a (non-zero) homology class in $\HFp(S^3_{p^2}(L_p),\spinc_0)$, its grading gives an upper bound for $d(S^3_{p^2}(L_p), \spinc_0) $.  This grading, in turn, is determined by the degree shift formula to be $-\frac{ (p+1)(p+3)}{4} - \frac{-p^2 +1}{4}= -p -1$.  (Here we use $m = 0$ and $q = p^2$ to compute the degree shift, $\frac{-p^2 +1}{4}$).

For the second statement,  consider the generator $\rho=U^{\frac{p^2-1}{8}}\cm [\x,\frac{p^2-1}{8}, \frac{p^2+8p-1}{8}]$, where $[\x,\frac{p^2-1}{8}, \frac{p^2+8p-1}{8}]$ is given by the third statement in Theorem~\ref{genthm}.  Observe that  $$\rho= [\x,0,p]\in C\{ \mathrm{max}(i,j-p)\ge0\}$$ whereas 
$$U^k\cm\rho = [\x,-k,p-k] \notin C\{ \mathrm{max}(i,j-p)\ge0\},$$
for all $k>0$.  By equivariance, $\rho$  is in the image of $U^i$ for all $i\ge0$.  Moreover, the first observation above, together with Theorem \ref{thmcompute},  allows us to view $\rho$ as a homology class in $\HFp(S^3_{p^2}(L_p),\spinc_p)$.

  Now this class may be zero.  Indeed, while $[\rho]$ is non-zero in $\HFinf(S^3)$ it could be a boundary in the quotient complex $C\{ \mathrm{max}(i,j-p)\ge0\}$.   The algebraic mechanism by which this could occur is illustrated in the example of Section  \ref{subsec:d-example}.  There, the cycle $e$ generated $\HFinf_{-6}(S^3)$, but was null-homologous in the quotient complex used for the computation of $d(S^3_{25}(T_{4,5}),\spinc_5)$.  
  
  On the other hand, since $\CFKinf$ is finitely generated as an $\F$--module for any fixed degree, we know that for all  $*\gg0$,  $$ C_{*}\{ \mathrm{max}(i,j-p)\ge0\}\cong \CFinf_{*}(S^3),$$ Hence $[U^{-k}\rho]$ is a non-trivial homology class in either group for sufficiently large $k>0$. It follows that 
   $$\mathrm{min}\{ gr(U^{-k}\rho)\ |\   [U^{-k}\rho]\ne0 \in \HFp(S^3_{p^2}(L_p),\spinc_p)\}$$   is well-defined and equals $d(S^3_{p^2}(L_p),\spinc_p)$.   The second observation above shows that $[U^k\cm \rho] =0 \in  \HFp(S^3_{p^2}(L_p),\spinc_p)$ for all $k>0$, so that  the grading of $\rho$, shifted by the quantity $s(p^2,p)$, provides a lower bound for $d(S^3_{p^2}(L_p),\spinc_p)$.  Explicitly, we have $$gr(\rho)-s(p^2,p)=-2\cm \frac{p^2-1}{8}-\frac{-p^2 +4p -3}{4}=-p+1,$$ and we see that $d(S^3_{p^2}(L_p),\spinc_p)\ge-p+1$, as claimed.

\end{proof}

Given the result of Theorem~\ref{t:dbarcomp}  that $\bar{d}(S^3_{p^2}(L_p), \spinc_p) \ge 2$, we have now completed   the final details for the proof of Theorem~\ref{infgen} as outlined in the introduction.

 %%%%%%%SECTION%%%%%%%%%%%%%%

 %%%%%%%SECTION%%%%%%%%%%%%%%
\section{Rational homology cobordisms}\label{cobordismsection}

The study of knot concordance is closely related to the study of    homology cobordism of rational homology spheres.  To make this formal, denote by $\Omega_{spin}^{\Q}$ the group of smooth spin $\Q$--homology $3$--spheres, modulo   smooth spin $\Q$--homology cobordisms.   For any prime-power $p^k$, there is a homomorphism $\calc  \to \Omega_{spin}^\Q$ induced by taking $p^k$--fold branched covers (see~\cite[\S 2]{grs} for a discussion of the spin structure) and many invariants of knot concordance (such as the ones used in this paper) factor through these maps.  From this point of view,  {\em integral} homology spheres are analogous to knots with Alexander polynomial $1$.

As an alternative, the  $2$--fold branched cover of a knot $K$ is a $\Z/2\Z$--homology sphere.  If we denote by $\Omega^{\Z/2\Z}$ the group of $\Z/2\Z$--homology spheres modulo $\Z/2\Z$--homology cobordism, the corresponding homomorphism maps $\calc \to \Omega^{\Z/2\Z}$. 

For the remainder of this section, we use $\Omega$ to denote either of $\Omega_{spin}^{\Q}$ or $\Omega^{\Z/2\Z}$.

In either case, $\Omega$ contains a subgroup generated by integral homology spheres.  We denote this subgroup $\Omega_I$.  Recall that Freedman's simply-connected surgery theory implies that any element in $\Omega$ represented by an {\it integral} homology sphere  bounds a topological homology ball, and thus $\Omega_I$ is in the kernel of the homomorphism $\Omega \to \Omega^{top}$, where $\Omega^{top}$ denotes the corresponding topological cobordism group.  Denote this kernel by $\Omega_T$.   Thus $\Omega_I \subset \Omega_T$.

By analogy with the question of whether a topologically slice knot is smoothly concordant to a knot of polynomial $1$  (that is, does $\calc_\Delta = \calc_T$?), we may ask whether $\Omega_I = \Omega_T$.   Theorem~\ref{infgen} provides a negative answer, which we may state in the following terms. 

\begin{theorem}\label{infgenQ} 
  $\Omega_{T}/\Omega_{I}$ contains an infinitely generated free subgroup, where $\Omega = \Omega^{\Q}_{spin}$ or  
  $\Omega = \Omega^{\Z/2\Z}$.
\end{theorem}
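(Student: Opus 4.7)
The plan is to transplant the obstruction strategy developed for $\calc_T/\calc_\Delta$ into the setting of $\Omega_T/\Omega_I$, applying it directly to the family of $3$--manifolds $Y_p := \Sigma(K_p) = S^3_{p^2}(L_p)$ rather than to the knots $K_p$ themselves. The upshot is that almost all the work has already been done in Sections~\ref{sectionspinc}--\ref{sectioncompute}; we only need to verify that the ingredients pass through the branched double cover construction and that the reduction theorem used in the proof of Theorem~\ref{infgen} is genuinely a statement about $3$--manifolds.

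First I would verify that each $[Y_p]$ lies in $\Omega_T$. Since $K_p$ is topologically locally flat slice, a slicing disk $D\subset B^4$ gives rise to a $2$--fold branched cover $\Sigma(B^4,D)$ which is a topological $\Z/2\Z$--homology (indeed $\Q$--homology) $4$--ball with boundary $Y_p$. In the spin case, any $\Z/2\Z$--homology ball is canonically spin, and its spin structure restricts to the unique spin structure on the $\Z/2\Z$--homology sphere $Y_p$; thus $[Y_p]$ is well-defined and trivial in $\Omega^{top}$ under either interpretation of $\Omega$.

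Next I would establish the linear independence of $\{[Y_p]\}$ modulo $\Omega_I$ by contradiction. Assume $\sum n_p[Y_p]\in\Omega_I$ with not all $n_p=0$, and let $P=\{p:n_p\neq 0\}$. Unpacking the relation in $\Omega$, there is a $\Z$--homology $3$--sphere $Y_1$ (the connected sum of $\Z$--homology spheres representing the class in $\Omega_I$) and a $\Z/2\Z$--homology ball $W$ (respectively, a spin $\Q$--homology ball) with $\partial W=\#_{p\in P}(n_pY_p)\#(-Y_1)$. Theorem~\ref{T:qhs} then furnishes, for each $p\in P$, a metabolizer $M_p\subset H_1(n_pY_p)$ on which $\bar{d}$ vanishes. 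Applying the reduction theorem stated immediately before Theorem~\ref{independence} to the $|n_p|$--fold connected sum $n_pY_p$, I would conclude $\bar{d}(Y_p,\spinc_{pk})=0$ for all $k$; this directly contradicts the estimate $\bar{d}(Y_p,\spinc_p)\ge 2$ from Theorem~\ref{t:dbarcomp}.

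The main point requiring care is that the reduction theorem---phrased in the main text in terms of a knot $K$ with $H_1(\Sigma(K))=\Z/p^2\Z$---applies equally to a general rational homology sphere $Y$ with $H_1(Y)=\Z/p^2\Z$. Its hypothesis and conclusion involve only the linking form on $H_1$ and the correction terms of an iterated connected sum, and the identification $\Sigma(nK)=n\Sigma(K)$ indicates that the proof in the appendix is purely algebraic; thus it transplants verbatim with $Y=Y_p$ in place of $\Sigma(K_p)$. A secondary bookkeeping item in the spin case is that since each $Y_p$ is a $\Z/2\Z$--homology sphere with a unique spin structure, the spin $\Q$--ball $W$ produced from the relation $\sum n_p[Y_p]\in\Omega_I$ automatically has boundary spin structure matching the canonical one on the $Y_p$, so Theorem~\ref{T:qhs} applies without modification.
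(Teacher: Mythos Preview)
Your proposal is correct and follows essentially the same approach as the paper: showing $\Sigma(K_p)\in\Omega_T$ via the branched cover of the topological slice disk, and then observing that the obstruction machinery of Sections~\ref{sectionspinc}--\ref{sectioncompute} (Theorem~\ref{T:qhs}, the appendix reduction, and Theorem~\ref{t:dbarcomp}) is really a statement about the $3$--manifolds $\Sigma(K_p)$ rather than the knots $K_p$. The paper compresses all of this into the single sentence ``the proof of Theorem~\ref{infgen} shows that no linear combination of the $\Sigma(K_p)$ together with an integral homology sphere is trivial in $\Omega$,'' whereas you have usefully unpacked the logic; your remark that the appendix theorem is purely about $H_1$ and $\bar d$ (and hence applies to any $Y$ with $H_1(Y)=\Z/p^2\Z$) is exactly the point that makes the transplant work.
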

\begin{proof}  The $2$--fold branched cover of any knot,  $\Sigma(K)$, is a $\Z/2\Z$--homology ball, and thus also  a rational homology ball. Since $K_p$ is topologically slice, the $2$--fold branched cover of $B^4$ over the slice disk is a  $\Z/2\Z$--homology ball, and thus also a rational homology ball.  This cover, and its boundary $\Sigma(K_p)$, have unique spin  structures.  Hence, for all $p$,  $\Sigma(K_p)$ represents a class in $\Omega_T$.  
   The proof of Theorem~\ref{infgen} shows that no linear combination of the $\Sigma(K_p)$ together with an integral homology sphere is trivial in $\Omega$.  Thus, $\{ \Sigma(K_p)\}_{p\in P}$ form an infinite linearly independent set in $\Omega_T / \Omega_I$ (here $P$ is, as before, the set of primes congruent to $3$ modulo $4$). 
\end{proof}

The rational homology balls bounded by the double branched covers $\Sigma(K_p)$ are interesting from the point of view of $4$--dimensional handlebody theory.  It follows from Cerf theory that a compact $4$--manifold $M$ that has a handlebody structure is smoothable; a weaker structure on $M$ is discussed in~\cite[Problem 4.74]{kirby:problems96} and is sometimes called a {\em pseudo-handlebody structure}.  This is a decomposition of $M =  M_0 \cup_\Sigma \Delta$ where $M_0$ is smooth (and hence has a handle decomposition relative to its boundary) and $\Delta$ is contractible.  Examples of manifolds without such structures were constructed by Stong and Taylor; see, for example, the discussion after~\cite[Problem 4.74]{kirby:problems96}.   Theorem~\ref{infgenQ} gives rise to further examples of such manifolds.
\begin{prop}
Suppose that $W$ is a topological rational homology ball and that $\partial W$ is not smoothly rationally homology cobordant to an integral homology sphere.  Then $W$ does not have a pseudo-handlebody structure.
\end{prop}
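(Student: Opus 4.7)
The plan is to argue by contradiction. Suppose $W$ admits a pseudo-handlebody structure $W = M_0 \cup_\Sigma \Delta$, where $M_0$ is a smooth compact $4$--manifold with boundary, $\Delta$ is contractible, and $\Sigma = \partial \Delta = M_0 \cap \Delta$. Since $\Delta$ is contractible, Poincaré--Lefschetz duality gives $H_*(\Sigma;\Z) \cong H_*(S^3;\Z)$, so $\Sigma$ is an integral homology sphere. The strategy is to exhibit $M_0$ as a smooth rational homology cobordism from $\partial W$ to $\Sigma$, which directly contradicts the hypothesis on $\partial W$.

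To carry this out, first note that since $W$ is a topological rational homology ball, the long exact sequence of $(W, \partial W)$ and Poincaré--Lefschetz duality with $\Q$ coefficients show that $\partial W$ is a rational homology $3$--sphere. Now I would apply the Mayer--Vietoris sequence with $\Q$ coefficients to the decomposition $W = M_0 \cup_\Sigma \Delta$. Since $H_*(\Sigma;\Q) = H_*(S^3;\Q)$, $H_*(\Delta;\Q) = H_*(\mathrm{pt};\Q)$, and $H_*(W;\Q) = H_*(\mathrm{pt};\Q)$, a short diagram chase through the sequence
\[
\cdots \to H_k(\Sigma;\Q) \to H_k(M_0;\Q) \oplus H_k(\Delta;\Q) \to H_k(W;\Q) \to H_{k-1}(\Sigma;\Q) \to \cdots
\]
forces $H_*(M_0;\Q) \cong H_*(S^3;\Q)$, i.e.\ $M_0$ has the rational homology of a point in degrees $1$ and $2$. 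Combined with the fact that both boundary components $\partial W$ and $\Sigma$ are rational homology spheres, this implies that the inclusions of each boundary component into $M_0$ induce isomorphisms on rational homology; equivalently, $H_*(M_0,\partial W;\Q) = H_*(M_0,\Sigma;\Q) = 0$, which can be verified directly by another application of the long exact sequence of a pair together with duality.

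Thus $M_0$ is a smooth rational homology cobordism between $\partial W$ and the integral homology sphere $\Sigma$. This contradicts the standing hypothesis that $\partial W$ is not smoothly rationally homology cobordant to any integral homology sphere, so no such pseudo-handlebody structure can exist. There is no serious technical obstacle here; the only care required is keeping track of orientations and verifying the boundary-inclusion maps are rational homology isomorphisms, which is routine given the Mayer--Vietoris output.
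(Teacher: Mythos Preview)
Your argument is correct and is precisely the intended proof; the paper states this proposition without proof, leaving the verification to the reader, and your Mayer--Vietoris computation showing that $M_0$ is a smooth rational homology cobordism from $\partial W$ to the integral homology sphere $\Sigma=\partial\Delta$ is exactly what is needed.
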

\begin{corollary}
Let $W_p$ be the $2$--fold cover of $B^4$, branched along the topological slice disk for $K_p$.  Then $W_p$ does not have a pseudo-handlebody structure.
\end{corollary}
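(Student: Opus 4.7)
The plan is to derive this as an immediate consequence of the preceding Proposition applied to $W = W_p$, so I need to verify the two hypotheses: that $W_p$ is a topological rational homology ball, and that $\partial W_p = \Sigma(K_p)$ is not smoothly rationally homology cobordant to an integral homology sphere.

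First I would verify the homology hypothesis on $W_p$. Since $K_p$ is topologically slice, $W_p$ is by definition a $2$--fold cover of $B^4$ branched over a locally flat disk. A standard transfer-sequence argument (which works in the topological locally flat category because the cover is cyclic of prime order) shows that any such $2$--fold cyclic cover of $B^4$ is a $\Z/2\Z$--homology ball, and in particular a rational homology ball.

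Next I would verify the rational cobordism hypothesis. If $\partial W_p = \Sigma(K_p)$ were smoothly rationally homology cobordant to an integral homology sphere, then in the group $\Omega$ (for either choice, $\Omega^{\Q}_{spin}$ or $\Omega^{\Z/2\Z}$) we would have $[\Sigma(K_p)] \in \Omega_I$. But Theorem~\ref{infgenQ} asserts that $\{[\Sigma(K_p)]\}_{p\in P}$ is $\Z$--linearly independent in $\Omega_T/\Omega_I$, and in particular each individual class is nonzero in that quotient. Hence $[\Sigma(K_p)] \notin \Omega_I$, and the hypothesis is satisfied. Applying the Proposition then gives the conclusion of the Corollary.

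The only step where real content enters lies in the Proposition itself, which is stated but not proved in the excerpt, so I would want to sketch a proof of it. Given a pseudo-handlebody decomposition $W = M_0 \cup_\Sigma \Delta$ with $\Delta$ contractible, the $3$--manifold $\Sigma = \partial\Delta$ is automatically an integral homology sphere, and $M_0$ is smooth by definition. A Mayer--Vietoris computation with rational coefficients, using that $H_*(W;\Q)$ and $H_*(\Delta;\Q)$ vanish in positive degrees while $\Sigma$ has the rational homology of $S^3$, forces $H_n(M_0;\Q) = 0$ for $n=1,2$ and $H_3(\Sigma;\Q) \xrightarrow{\sim} H_3(M_0;\Q)$; combined with Poincar\'e--Lefschetz duality this identifies $\Sigma \hookrightarrow M_0$ as a rational homology equivalence. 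Thus $M_0$ exhibits $\partial W$ as smoothly rationally homology cobordant to the integral homology sphere $\Sigma$, contradicting the hypothesis of the Proposition. No step here is a real obstacle; the weight of the Corollary is borne entirely by Theorem~\ref{infgenQ}, whose proof was the main project of the preceding sections.
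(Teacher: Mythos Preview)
Your overall strategy is exactly what the paper intends: the Corollary is meant to follow from the Proposition together with Theorem~\ref{infgenQ}, and your sketch of the Proposition via Mayer--Vietoris is correct.

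There is, however, a small logical gap in your verification of the second hypothesis. You write that if $\Sigma(K_p)$ were smoothly \emph{rationally} homology cobordant to an integral homology sphere, then $[\Sigma(K_p)]\in\Omega_I$ in either $\Omega^{\Q}_{spin}$ or $\Omega^{\Z/2\Z}$. But a rational homology cobordism need not be spin, nor need it be a $\Z/2\Z$--homology cobordism, so this implication is not immediate for either choice of $\Omega$. The fix is easy and in fact already implicit in your argument: since $W_p$ is a $\Z/2\Z$--homology ball (not merely a rational one---you noted this yourself), run your Mayer--Vietoris computation for the decomposition $W_p=M_0\cup_\Sigma\Delta$ with $\Z/2\Z$ coefficients rather than $\Q$ coefficients. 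The same reasoning then shows $M_0$ is a smooth $\Z/2\Z$--homology cobordism from $\Sigma(K_p)$ to the integral homology sphere $\Sigma$, so $[\Sigma(K_p)]=[\Sigma]\in\Omega_I\subset\Omega^{\Z/2\Z}$, contradicting Theorem~\ref{infgenQ}. In effect you are proving a $\Z/2\Z$ refinement of the Proposition tailored to $W_p$, which is all that is needed here.
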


 %%%%%%%SECTION%%%%%%%%%%%%%%
\section{Boundary links}\label{sectionboundary}
Our techniques also apply to understanding an important issue that arises in the theory of link concordance.  Recall that a link $L = L_1 \cup \cdots \cup L_k$ is called a {\em boundary link} if its components bound disjoint Seifert surfaces.   It is well-known that not every link is concordant to a boundary link. The methods for demonstrating this~\cite{cochran-orr:boundary-links, livingston:boundary-links, gl,   mi1, mi2}   typically show that a given link is not topologically (locally flat) concordant to a boundary link.  As in knot concordance, this raises the question addressed in this section: {\em is every link that is topologically concordant to a boundary link in fact smoothly concordant to a boundary link?}  The $2$--component link $L_p$ illustrated in Figure~\ref{f:blink}, where $-p$ denotes half twists between the bands, provides a counterexample.

\begin{theorem}\label{t:blink}
For any $p \equiv 3 \mod 4$, the $2$--component link in Figure~\ref{f:blink} is topologically slice (and hence concordant to a boundary link) but not smoothly concordant to a boundary link.
\end{theorem}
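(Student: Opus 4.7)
The plan has two parts: establishing topological sliceness of $L_p$ via Freedman's theorem, and obstructing smooth concordance to a boundary link by reducing to the smooth concordance obstruction for $K_p$ developed in the preceding sections.

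For topological sliceness I would proceed as for $K_p$. The two components of $L_p$ bound disjoint bands in $S^3$ --- the bands of the Klein bottle of $K_p$ separated by the fusing band of Figure~\ref{f:blink} --- with the knot $J_p$ tied into one of them. Since $J_p$ is a connected sum of untwisted Whitehead doubles of the trefoil, it has trivial Alexander polynomial and so bounds a topological disk in $B^4$ by Freedman's theorem. Performing the corresponding topological surgery disjointly from the other band produces a pair of disjoint topological slice disks for $L_p$. Thus $L_p$ is topologically slice, and in particular topologically concordant to the unlink, which is a boundary link.

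For the smooth obstruction, I would argue by contradiction. Suppose $L_p$ is smoothly concordant to a boundary link $L' = L_1' \cup L_2'$ with disjoint smoothly embedded Seifert surfaces $F_1', F_2' \subset S^3$. Pushing the fusing band through the concordance yields a band $b'$ between $L_1'$ and $L_2'$, and the band sum $K' = L_1' \cup_{b'} L_2'$ is smoothly concordant to $K_p$. Generalizing the observation of~\cite{livingston:boundary-links}, I would combine the pushed-in Seifert surfaces with the band $b'$ into a smoothly embedded surface $\hat F \subset B^4$ bounded by $K'$; its double branched cover, glued along $\Sigma(K')$ to the double cover of the concordance from $K_p$ to $K'$, yields a smooth $4$-manifold $W$ with $\partial W = \Sigma(K_p)$.

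The critical claim is that $W$ is a $\mathbb{Z}/2$-homology ball. The disjointness of $F_1'$ and $F_2'$ forces the Seifert form of $\hat F$ to be block diagonal up to hyperbolic contributions coming from tubings required to resolve intersections between $b'$ and the $F_i'$; the hyperbolic pieces vanish in $\mathbb{Z}/2$-homology, while the components of $L'$, being smoothly concordant to the components of $L_p$ (which have trivial Alexander polynomial), contribute only $\mathbb{Z}$-homology sphere branched covers and so add nothing to $H_*(W;\mathbb{Z}/2)$. Once $W$ is identified as a $\mathbb{Z}/2$-homology ball, Theorem~\ref{T:qhs} yields a metabolizer $M \subset H_1(\Sigma(K_p)) = \mathbb{Z}/p^2\mathbb{Z}$ on which $\bar d$ vanishes. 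Since the only metabolizer of $\mathbb{Z}/p^2\mathbb{Z}$ is $p\mathbb{Z}/p^2\mathbb{Z}$, this forces $\bar d(\Sigma(K_p), \mathfrak{s}_p) = 0$, contradicting Theorem~\ref{t:dbarcomp}, which gives $\bar d(\Sigma(K_p), \mathfrak{s}_p) \geq 2$. The principal obstacle is the homological analysis of $W$ --- tracking the contributions from the tubings of $b'$ and the pushed-in Seifert surfaces to confirm the $\mathbb{Z}/2$-homology ball property.
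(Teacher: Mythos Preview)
Your proposal rests on a misreading of the link. In the paper, $L_p = K_p \cup \alpha$: the first component is the knot $K_p$ itself, and $\alpha$ is an auxiliary unknotted circle linking it. The dotted band in Figure~\ref{f:blink} is a fission band on the component $K_p$ (turning it into two parallel copies of $J_p$, split from $\alpha$), not a fusion band joining the two components of $L_p$. Consequently there is no ``fusing band between the components'' to push through a concordance, and your construction of $K'$, $\hat F$, and $W$ never gets off the ground. The topological sliceness argument is similarly affected: the paper slices $K_p$ by the indicated band move and caps off the resulting copies of $J_p$ (which are topologically slice), while $\alpha$ is capped off separately by a disk.

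Even granting your intended construction for some link, the step you flag as ``the principal obstacle'' is a genuine gap, not a technicality. Branching $B^4$ over a surface of positive genus does not in general produce a $\Z/2\Z$--homology ball, and the appeal to block-diagonal Seifert forms plus ``hyperbolic contributions vanishing mod $2$'' is not an argument: hyperbolic summands of a Seifert form do not contribute trivially to $H_1$ of the branched cover, and the components of $L'$ need not individually have trivial Alexander polynomial. The paper sidesteps exactly this difficulty. It doubles the putative concordance to obtain a $3$--component link $\hat L_p = (K_p\#-K_p)\cup\alpha\cup\beta$ concordant to a boundary link, then caps off the $(K_p\#-K_p)$--component with a slice disk $D$. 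Branching $B^4$ over the \emph{disk} $D$ gives a rational homology ball $W$ automatically, with $\partial W = \Sigma(K_p)\#-\Sigma(K_p)$. The role of the extra components $\alpha,\beta$ --- via Lemma~\ref{l:linkingnumber} --- is precisely to identify the metabolizer $\ker\big(H_1(\partial W)\to H_1(W)\big)$ as $p\Z/p^2\Z \oplus p\Z/p^2\Z$, which then forces $\bar d(\Sigma(K_p),\spinc_{pi})=0$ and contradicts Theorem~\ref{t:dbarcomp}.
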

\begin{figure}[h]
\psfrag{jp}{$J_p$}
\psfrag{kjp}{$K_p$}
\psfrag{alpha}{$\alpha$}
\psfrag{mpo2}{$-p$}
\psfrag{band}{band move}
\fig{.6}{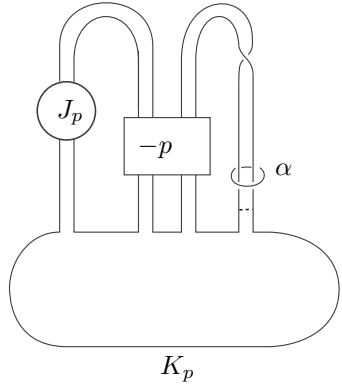}
\caption{The link $L_p = K_p \cup \alpha$.}\label{f:blink}
\end{figure}

The proof requires some preparatory material.

\begin{lemma}\label{l:linkingnumber} 
 Suppose that a link $L = (L_1, L_2) \subset S^3 \times \{1\}$ is concordant to a boundary link $L' = (L_1', L_2') \subset S^3 \times \{0\}$  via a concordance $C \cong C_1 \amalg C_2 \subset S^3 \times [0,1]$.  Let $\Sigma_i'$ be disjoint Seifert surfaces for $L_1',\ L_2'$, and let $\Sigma _i = C_i \cup_{L_i'} \Sigma_i'$.  Finally, view $B^4$ as the union along $S^3 \times \{0\}$ of $S^3 \times [0,1]$ with a $4$--ball, and view all of the surfaces just described as embedded in $B^4$.
Then the map $H_1(\Sigma_1) \to H_1(B^4 - \Sigma_2)$ induced by inclusion is trivial. 
\end{lemma}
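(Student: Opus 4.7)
The plan is to show that every class in $H_1(\Sigma_1)$ is represented by a loop that bounds a (possibly singular) $2$-disk in the ``ball half'' of $B^4$, and that such a disk automatically misses $\Sigma_2$.  Following the lemma's setup, write $B^4 = (S^3 \times [0,1]) \cup_{S^3 \times \{0\}} D$, where $D$ denotes the glued-in $4$-ball.

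First I would reduce to loops lying in $\Sigma_1'$.  Since $\Sigma_1 = C_1 \cup_{L_1'} \Sigma_1'$ with $C_1$ an annulus, and since $L_1' = \partial \Sigma_1'$ is nullhomologous in $\Sigma_1'$, a Mayer--Vietoris computation shows that the inclusion $\Sigma_1' \hookrightarrow \Sigma_1$ induces an isomorphism on $H_1$.  Hence it is enough to prove that every loop $\gamma \subset \Sigma_1'$ is nullhomologous in $B^4 - \Sigma_2$.

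Given such a $\gamma$, note that $\gamma$ lies on $\partial D = S^3 \times \{0\}$.  Since $D$ is contractible, coning $\gamma$ to any interior point of $D$ produces a (possibly singular) $2$-disk $\Delta \subset D$ with $\partial \Delta = \gamma$ and $\mathrm{int}(\Delta) \subset \mathrm{int}(D)$.  The key observation is that $\Delta$ misses $\Sigma_2$: the cylinder $C_2$ is contained in $S^3 \times [0,1]$, so $\Sigma_2 \cap D = \Sigma_2'$ sits entirely on $\partial D$.  Consequently the interior of $\Delta$ cannot meet $\Sigma_2$, while $\partial \Delta = \gamma \subset \Sigma_1'$ is disjoint from $\Sigma_2'$ by the boundary-link hypothesis.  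Thus $\Delta \subset B^4 - \Sigma_2$, realizing $[\gamma] = 0$ in $H_1(B^4 - \Sigma_2)$ and completing the proof.

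No serious obstacle presents itself: the argument is a clean geometric application of the ball-half decomposition, and the only nontrivial input is the hypothesis that $\Sigma_1'$ and $\Sigma_2'$ are disjoint Seifert surfaces, which is exactly what is used to ensure that $\partial \Delta$ avoids $\Sigma_2$ on the boundary sphere.
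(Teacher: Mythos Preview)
Your argument is correct and follows the same underlying idea as the paper's one-line proof: both reduce to loops on $\Sigma_1'$ and then use that such loops bound away from $\Sigma_2$. The paper phrases this as ``curves on $\Sigma_1'$ have trivial linking number with $L_2'$'' (zero linking because $\Sigma_1'$ misses the Seifert surface $\Sigma_2'$), while you give the more explicit geometric realization by coning into the inner $4$--ball; these are two packagings of the same fact.
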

This follows from the fact that curves on $\Sigma_1'\subset S^3 \times \{0\}$ have trivial linking number with $L_2'$. 
\begin{proof}[{\bf Proof of Theorem~\ref{t:blink}}] For simplicity the link will simply be referred to as $L = K \cup \alpha$.  To see that $L$ is a topologically slice link, do the band move indicated by the dotted line in Figure~\ref{f:blink}.  After the band move indicated in Figure~\ref{f:blink}, the link becomes two parallel copies of $J$, separated from $\alpha$ by a $2$--sphere.  The band gives rise to a genus  $0$ cobordism in $S^3 \times [0,1]$ from $L$ to two parallel copies of $J$ having linking number $0$.  Moreover, this cobordism  is separated from $\alpha \times [0,1]$.  Glue a $4$--ball to $S^3 \times [0,1]$ as above, and recall that $J$ is chosen to be topologically slice, so that we can cap off both copies of $J$ with slice disks, and $\alpha\times \{0\}$ with a disk as well, to give a topological slice for $L$.

Suppose that $L$ is smoothly concordant to a boundary link, by a concordance $C = C_K \cup C_\alpha$.  Let $r$ be a reflection of $S^3$, and let $-C$ be the image of $C$ under $r \times id_I: S^3 \times I$.  Gluing $C$ to $-C$ along the component $C_K$ would then give a concordance from the $3$--component link $\hat L_p$ drawn in Figure~\ref{f:Lhat} to a boundary link. 
\begin{figure}[h]
\psfrag{jp}{$J_p$}
\psfrag{kjp}{$K_p$}
\psfrag{alpha}{$\alpha$}
\psfrag{beta}{$\beta$}
\psfrag{mjp}{$-J_p$}
\psfrag{mkjp}{$-K_p$}
\psfrag{po2}{$p$}
\psfrag{mpo2}{$-p$}
\fig{.6}{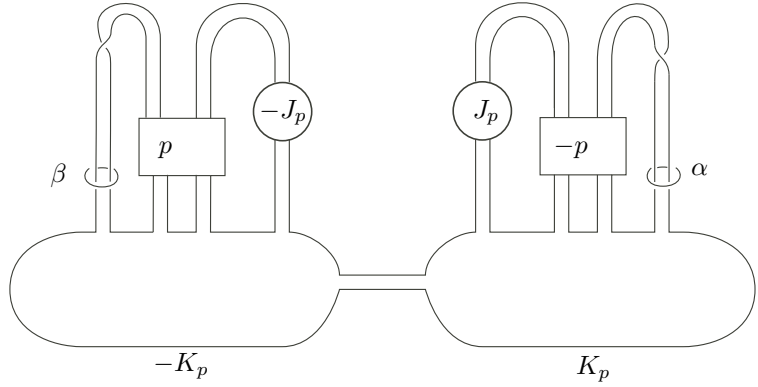}
\caption{$\hat L_p = K_p \# -K_p \cup \alpha \cup \beta$.}\label{f:Lhat}
\end{figure}

To complete the proof of the theorem, we show that $\hat L_p$ is not concordant to a boundary link.    If $\hat L_p$ were concordant to a boundary link, then the component of the boundary link corresponding to (the slice knot) $K_p \# -K_p$ would be slice.  Attach  a $4$--ball to the $S^3 \times I$ containing the concordance, and add on a slicing disk to that component, yielding a slicing disk $D$ for $K_p \# -K_p$ in $B^4$.  The knots $\alpha$ and $\beta$ bound disjoint surfaces in the complement of $D$, and according to Lemma~\ref{l:linkingnumber} the inclusions of these surfaces in $B^4 - D$ induce trivial maps in $H_1$.  This is illustrated schematically in Figure~\ref{f:config}.
\begin{figure}[h]
\psfrag{Khat}{$K_p \#-K_p$}
\psfrag{alpha}{$\alpha$}
\psfrag{s3i}{$S^3 \times I$}
\psfrag{b4}{$B^4$}
\psfrag{beta}{$\beta$}
\psfrag{Delta}{$D$}
\fig{.6}{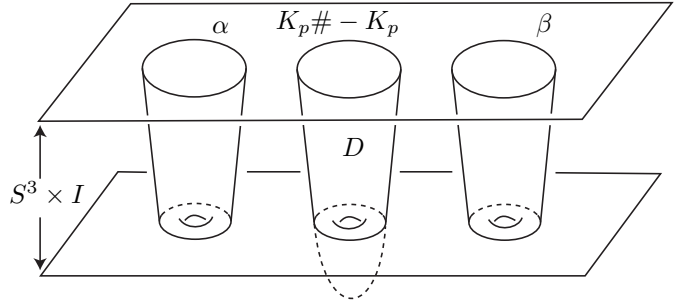}
\caption{Concordance to a boundary link, with one component filled in by a slicing disk.}\label{f:config}
\end{figure}

Let $W$  be the $2$--fold branched cover of  $B^4$, branched along $D$; this is a rational homology ball and its boundary is $\Sigma(K_p \# -K_p)$, which is diffeomorphic to the connected sum $\Sigma(K_p) \# -\Sigma(K_p)$.  Since $H_1(\Sigma(K_p \# -K_p)) \cong \Z_{p^2} \oplus \Z_{p^2}$, we know that $\ker\left[H_1(\Sigma(K_p \# -K_p) )\to H_1(W)\right]$ has order $p^2$.   To determine this kernel more precisely, observe that by Lemma~\ref{l:linkingnumber},  $H_1$ of the surfaces bounded by $\alpha$ and $\beta$ becomes trivial in $B^4 - D$, so those surfaces lift to $W$.  In particular, the inverse image of $\alpha$ consists of two curves, each of which is null-homologous in $W$, and similarly for $\beta$.  Let us write $\tilde\alpha$ for one of those lifts, and $\tilde\beta$ for one of the lifts of $\beta$.

A surgery picture for $\Sigma(K_p \# -K_p)$ is obtained from a copy of the diagram in Figure~\ref{figure2} for $\Sigma(K_p)$, together with its reflection.  The curve $\tilde\alpha$ appears as a meridian of the $-1$ framed unknot; when that unknot is blown down to produce Figure~\ref{figure3}, $\tilde\alpha$ twists $p$ times around the knot $T_{p-1,p} \cs J_p \cs J_p^r$.  Similarly, $\tilde\beta$ twists $p$ times around $-(T_{p-1,p} \cs J_p \cs J_p^r)$.   Since $H_1(\Sigma(K_p \# -K_p))$ is generated by the meridians of those two knots, it follows that $\tilde\alpha$ and $\tilde\beta$ generate $\ker\left[H_1(\Sigma(K_p \# -K_p)) \to H_1(W)\right] \cong \Z_{p} \oplus \Z_{p}$.

Writing, as before $\spinc_i \oplus \spinc_j$ for the $\SpinC$  structure corresponding to the element $(i,j) \in H_1(\Sigma(K_p \# -K_p))$, we see that $d((\Sigma(K_p) \# -\Sigma(K_p),\spinc_{pi} \oplus \spinc_{pj}) = 0$ for all $i,j$.  But 
$$
d((\Sigma(K_p) \# -\Sigma(K_p),\spinc_{pi} \oplus \spinc_{pj}) = d(\Sigma(K_p),\spinc_{pi})  - d(\Sigma(K_p), \spinc_{pj}).
$$
Taking $j=0$, we see that $\bar{d}$ vanishes on the subgroup $p\Z/{p^2}\Z$.  This contradicts Theorem~\ref{t:dbarcomp}, and hence the theorem is proved.
\end{proof}
 %%%%%%%SECTION%%%%%%%%%%%%%%

\appendix
\section{Metabolizers for $(\Z/ p^2 \Z)^n$ and $\bar{d}$}

 Let $M \subset H_1(nY_p) \cong (\Z / p^2\Z)^n$ be a subgroup of order $p^n$ on which $ \bar{d}$ vanishes.  We let $M_p = \{m\in M \ |\ pm = 0\}$.  To each    $m = p(m_1, \ldots , m_n) \in M_p$ we have a relation $\sum \bar{d}(Y_p, \spinc_{pm_i}) = 0$. Our goal is to show that these relations are sufficient to conclude $\bar{d}(Y_p, \spinc_{kp})  = 0$ for all $k$.
 We state this as a theorem, the proof of which occupies this appendix.
 
 \begin{theorem}  Let $ M \subset H_1(nY_p) \cong (\Z / p^2\Z)^n$ be a metabolizer, and suppose that $\bar{d}(nY_p, \spinc_m) = 0$ for all $m \in  M$.  Then $\bar{d}(Y_p, \spinc_{ip}) = 0$ for all $i$.
 
 \end{theorem}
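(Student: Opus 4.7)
The plan is to translate the metabolizer hypothesis into linear constraints on the function $h(k) := \bar{d}(Y_p, \spinc_{pk})$ and show that, for $p \equiv 3 \pmod 4$, these constraints force $h \equiv 0$. Writing $g(j) = \bar{d}(Y_p, \spinc_j)$, additivity of $d$ under connected sums, together with the identification of $\SpinC$-structures on $nY_p$ with tuples in $H_1(Y_p)^n$, rephrases the hypothesis as $\sum_{i=1}^n g(m_i) = 0$ for every $m = (m_1, \ldots, m_n) \in M$; the conjugation symmetry $d(Y,\spinc) = d(Y, \overline{\spinc})$ moreover yields $h(-k) = h(k)$.

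To isolate constraints purely on $h$, I would first restrict attention to the $p$--torsion subgroup $M[p] = M \cap (p\Z/p^2\Z)^n$. Identifying $p\Z/p^2\Z \cong \Z/p\Z$ via $pk \mapsto k$, the image of $M[p]$ is a subspace $N \subseteq \F_p^n$, and the hypothesis becomes $\sum_i h(k_i) = 0$ for all $(k_1, \ldots, k_n) \in N$. The next step is to identify $N$ structurally: writing $\overline{M} \subseteq \F_p^n$ for the reduction of $M$ modulo $p$, a direct computation with the diagonal linking form shows that $\overline{M}$ is totally isotropic under the standard $\F_p$--dot product, and pairing $M$ against $M[p]$ identifies $N = \overline{M}^\perp$. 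Thus $M \cong (\Z/p^2\Z)^s \oplus (\Z/p\Z)^{n-2s}$ with $s = \dim_{\F_p}\overline{M}$, and $\dim N = n - s$. The easy case $s = 0$ is immediate: $N = \F_p^n$, so the tuples $(k, 0, \ldots, 0) \in N$ force $h \equiv 0$ at once.

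The core of the argument, for $s \geq 1$, uses that $-1$ is a non-residue modulo $p$: the quadratic form $\sum x_i^2$ has restricted Witt index, so the ``diagonal'' metabolizers $\{(x, \alpha x) : \alpha^2 \equiv -1 \pmod{p^2}\}$ that defeat the conclusion for $p \equiv 1 \pmod 4$ simply do not exist here. Following the approach of Livingston--Naik~\cite{livingston-naik:4-torsion, livingston-naik:torsion}, I would perform a case analysis (or induction on $s$, after a change of basis putting $\overline{M}$ into a standard isotropic decomposition) showing that $N = \overline{M}^\perp$ always contains, for each $k \in \F_p^\times$, tuples whose multiplicity vectors, combined with $h(-k) = h(k)$ and the $\F_p^\times$--scaling invariance of $N$, cut the linear system on $(h(1), \ldots, h((p-1)/2))$ down to the trivial solution. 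The main obstacle will be precisely this classification of admissible totally isotropic subspaces and the explicit verification that $\overline{M}^\perp$ is sufficiently rich in low-support tuples --- a step that genuinely needs $p \equiv 3 \pmod 4$, since the desired conclusion fails when $-1$ is a square modulo $p$ (as the two-component diagonal example illustrates).
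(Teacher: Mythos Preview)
Your reduction to the $p$--torsion $M[p]$, the identification $N=\overline{M}^{\perp}$ with $\overline{M}$ totally isotropic for the diagonal form, and the use of the $\F_p^\times$--scaling invariance of $N$ are all correct and are shared with the paper's argument. Where you diverge is in the endgame: you propose to classify the admissible isotropic subspaces $\overline{M}$ and verify case by case that $\overline{M}^{\perp}$ contains enough low--support tuples, and you flag this as the main unfinished obstacle. That obstacle is genuine. Note in particular that your heuristic ``restricted Witt index'' is misleading: for $n\equiv 0 \pmod 4$ the form $\sum x_i^2$ over $\F_p$ with $p\equiv 3\pmod 4$ has Witt index $n/2$, the same as in the $p\equiv 1\pmod 4$ case, so the isotropic subspaces are not globally constrained in the way your sketch suggests. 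A Livingston--Naik style induction on $s=\dim\overline{M}$ may well be workable, but you have not carried it out, and it would be considerably more involved than what the paper does.

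The paper avoids any classification. By Gaussian elimination on a minimal generating set it produces a \emph{single} element $z\in M[p]$ all of whose entries are multiples of $p$ and at least $n/2$ of which equal $p$ exactly. Writing $q=(p-1)/2$ and identifying the relation space with $\Q[\Z_q]\cong\Q[t]/(t^q-1)$ via the cyclic action of $\F_p^\times/\{\pm 1\}$ you already noted, the multiplicity vector of $z$ becomes a polynomial $f(t)=c_0+c_1t+\cdots+c_{q-1}t^{q-1}$ with all $c_i\ge 0$ and $c_0\ge\sum_{i>0}c_i$. Here is where $p\equiv 3\pmod 4$ enters, but as ``$q$ is odd'' rather than ``$-1$ is a non-residue'': every nontrivial $q$-th root of unity $\omega$ then has $\mathrm{Re}(\omega)>-1$, so $\mathrm{Re}\,f(\omega)>c_0-\sum_{i>0}c_i\ge 0$ and $f$ has no common factor with $t^q-1$. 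Thus $f$ generates $\Q[\Z_q]$ as an ideal, and the single relation together with its $\F_p^\times$--translates already forces every $\bar d_i=0$. This replaces your proposed structural case analysis with a two--line estimate on roots of unity.
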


  \subsection{Special elements in the metabolizer}

We begin by  showing that any metabolizer contains element of  a special type.
 
  \begin{theorem}\label{specvect} If $M \subset   (\Z/ p^2 \Z)^n $ has order $p^n$, then it contains an element of the form $z = (b_1, b_2, \ldots, b_n)$ where at least $n/2$ of the $b_i$ are equal to $p \in  \Z/ p^2 \Z$ and all $b_i$ are multiples of $p$.
 
 \end{theorem}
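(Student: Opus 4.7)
My plan is to reduce the theorem to a standard linear-algebra statement about subspaces of $\F_p^n$, by first identifying the $p$-torsion of $M$ (which is automatically the set of elements all of whose coordinates are divisible by $p$) with an $\F_p$-subspace of $\F_p^n$ whose dimension is at least $n/2$.

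I would begin by setting $V := M[p] = \{m \in M : pm = 0\}$. Under the identification $p\Z/p^2\Z \cong \F_p$ sending $p \mapsto 1$, $V$ sits inside $(p\Z/p^2\Z)^n \cong \F_p^n$ as an $\F_p$-subspace, and it is precisely the set of elements of $M$ all of whose coordinates are multiples of $p$. The structure theorem for finite abelian $p$-groups allows us to write $M \cong (\Z/p^2\Z)^a \oplus (\Z/p\Z)^c$, and the hypothesis $|M| = p^n$ forces $2a + c = n$. The $p$-torsion then has $\F_p$-dimension $a + c = n - a$, and since $c \ge 0$ forces $a \le n/2$, we obtain $\dim_{\F_p} V \ge \lceil n/2 \rceil$. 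This dimension estimate is the only genuinely structural input, and I expect it to be the main conceptual step.

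The second ingredient is an elementary fact: any subspace $V \subseteq \F_p^n$ of dimension $s$ admits a subset $S \subseteq \{1, \ldots, n\}$ of size $s$ on which the coordinate projection $\pi_S: V \to \F_p^S$ is an isomorphism. Concretely, represent a basis of $V$ as the rows of an $s \times n$ matrix of rank $s$, and choose $s$ linearly independent columns; their indices form the required $S$. Applying this to $V = M[p]$ with $s \ge \lceil n/2 \rceil$, let $v \in V$ be the unique preimage of $\mathbf{1}_S \in \F_p^S$ under $\pi_S$, so that $v_i = 1$ for every $i \in S$.

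Finally, I would translate $v$ back through the identification $\F_p \cong p\Z/p^2\Z$ to obtain the desired element $z \in M$: every coordinate of $z$ lies in $p\Z/p^2\Z$ (so all $b_i$ are multiples of $p$), and at least $|S| \ge n/2$ of its coordinates are equal to $p$. The main obstacle is really just recognizing that the claim is equivalent to producing, in an arbitrary $\F_p$-subspace of dimension $\ge n/2$, a vector with a prescribed value occurring in at least half of the coordinates; once this reduction is in hand, the column-pivot argument supplies everything needed.
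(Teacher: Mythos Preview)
Your proof is correct and arrives at the same element as the paper's, but the organization is somewhat different and, in my view, cleaner. The paper works directly with a minimal generating set of $M$ (whose size $N$ it observes is at least $n/2$), performs Gauss--Jordan over $\Z/p^2\Z$ to put the generators in echelon form, and then scales each generator so that its pivot entry becomes $p$ and all entries become multiples of $p$; summing these scaled generators yields the desired $z$. You instead pass immediately to the $p$-torsion $M[p]\subset (p\Z/p^2\Z)^n\cong\F_p^n$, use the structure theorem to see that $\dim_{\F_p}M[p]=a+c=n-a\ge\lceil n/2\rceil$, and then run the pivot argument purely over $\F_p$. The two arguments are really the same linear algebra in different clothing: the paper's $N$ is exactly your $a+c$, and the paper's scaling step is precisely the passage to $M[p]$. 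What your approach buys is a cleaner separation of the two ingredients (the dimension bound via the structure theorem, and the elementary pivot lemma over a field), and it makes the role of $M[p]$ explicit from the start rather than having it emerge implicitly after the scaling step.
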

 
 \begin{proof} Any generating set for $M$ must have at least $n/2$ elements.  (This~holds whether $n$ is even or odd.)     Let a minimal generating set for $M$ consist of elements $v_i$, $1 \le i \le N$. After perhaps rearranging the order  of the summands of  $(\Z/ p^2 \Z)^n$, a change of generating sets (corresponding to performing row operations in the Gauss-Jordan algorithm) yields generators of the form $$w_i = (0, \ldots, 0, w_{i,i}, 0 , \ldots, 0, w_{i,N}, \ldots , w_{i,n}), \hskip.1in 1 \le i \le N,$$ where $w_{i,i} \ne 0$ and  if $w_{i,i} $ is a nonzero multiple of $p$, then so are all $w_{i,j}$.
 
 Multiplying each $w_i$ by an appropriate element of $\Z/ p^2\Z$ we get a set of elements   $z_i \in M \subset (\Z/ p^2 \Z)^n$ of the form $z_i = (0, \ldots, 0, p, 0 , \ldots, 0, w_{i,N}, \ldots , w_{i,n})$, $1 \le i \le N$, where now each $w_{i,j}$ is divisible by $p$.  (If any of the multipliers is a multiple of $p$, this set will no longer generate $M$.)  Finally, let $z$ be the sum of the $z_i$, giving us the element $z = (p, p, \ldots, p, b_{N+1}, \ldots , b_{n})$ where each entry of $z$ is a multiple of $p$.
 \end{proof}

 \subsection{The relations space for $\bar{d}(Y_p,\spinc_{pm_i})$}
  
  To simplify notation we write $\bar{d_i} = \bar{d}(Y_p,\spinc_{p i})$ for $1 \le i \le \frac{p -1}{2}$.  Recall that $p \equiv 3 \mod 4$, and thus $p = 2q +1$ for some odd $q$. It follows that $ \frac{p -1}{2} = q$ is an integer.  Let $\calr = \{ (\alpha_1, \cdots , \alpha_q) \in \Q^q\ | \  \sum \alpha_i \bar{d}_i = 0 \}$.  Note that unless all $\bar{d}_i = 0$, $\calr$ is a $(q-1)$--dimensional subspace of $\Q^q$.
  
  Each element in $M_p$ determines an element in $\calr$.  We denote this map (not a homomorphism) by $\psi:$
  
  $$\psi (  p(m_1, \ldots , m_n))= (\alpha_1, \ldots, \alpha_q),$$ where $\alpha_j$ is the number of $m_i = \pm j \mod q$.
  
  If we view the integers $\{1, \ldots , q\}$ as representatives of the   multiplicative group $\Z_p^*/\{\pm 1\}$, (a cyclic group of order $q$) then a multiplicative generator of $\Z_p^*$, say $a$, gives a cyclic permutation of $  \{1, \ldots , q\}$ of order $q$.    For example, if $p = 23$ then $5$ generates $\Z_{23}^*$.  Multiplying the elements of  $ L =  \{1, \ldots , 11\}$ by $5$ and,  if need be multiplying by $-1$ to arrive at elements in $L$, gives a map of order $11$.
  
 $$(1,2,3,4,5,6,7,8,9,10,11) \to (5,10, 15, 20, 25, 30, 35, 40, 45, 50, 55)  \equiv $$  $$(5,10, -8, -3, 2, 7, -11, -6, -1, 4, 9)$$  The resulting permutation in cyclic notation is:
 
 $$(1, 5, 2,10, 4, 3, 8, 6,7, 11, 9).$$  This defines a cyclic action $\rho$ on $\Q^q$.  
 
 Multiplication by $a$ also acts on $M_p$ and the map $\psi$ is equivariant with respect to $\rho$.  That is, the subspace of $\calr$ generated by $\psi(M_p)$ is invariant under the action of $\rho$.  As we now see, there are very few invariant subspaces.
 The action $\rho$ makes $\Q^q$ into
  a free $\Q[\Z_q]$--module:  $\Q[\Z_q] = \Q[t]/\left< t^q - 1 \right>$. 
   This module splits into a direct sum of cyclic summands:  $$ \Q[t]/\left< t^q - 1 \right> \cong  \Q[t]/\left< t - 1 \right> \oplus_d \Q[t]/ \left<\phi_d(t)\right>,$$  where the $\phi_d$ are the $d$--cyclotomic polynomials and $d$ ranges over all nontrivial divisors of $q$.   
    
 \subsection{Conclusion of proof}
  The vector $z$ given by Theorem~\ref{specvect} 
  satisfies $\psi(z) = (n, b_2, \ldots , b_q)$ where
   $\sum b_j \le n$ and all $b_j \ge 0$.
     Viewed as an element in $\Q[\Z_q]$, 
      $\psi(z) = f_z(t) =  n + \beta_1 t + \beta_2t^2 + \cdots + \beta_{q-1}t^{q-1}$ where
    the $\beta_i$ are some permutation of the $b_i$.
     If $\psi(z)$ were in some proper invariant subspace of $\Q[\Z_q]$, then $f_z(t)h(t)$ would be a multiple of $t^{q}-1$ for some proper divisor $h(t)$ of $t^q -1$.  This would imply that $f_z(\omega) =0$ for some $q$--root of unity.   But any such (odd) root of unity has real part greater than $-1$; considering the real part of the $f_z(\omega)$ and the fact that the constant term is at least as large as the sum of the remaining coefficients, this is impossible.
     
     In conclusion, $\calr = \Q^q$ and in particular every relation $\bar{d}_i = 0$ is among the relations.
     
     \subsection{Example}  To clarify the previous discussion, we present an example.  Suppose that we are in the case $p=31$ and $n= 8$.  Theorem~\ref{specvect} tells us that within the metabolizer for $H_1(Y_p) \cong (\Z_{31^2})^8$ there is a vector with all entries divisible by $31$ and at least four of them equal to $31$, for instance $$(31, 31, 31, 31, 13(31), 13(31), 27(31), 0).$$
 The presence of this vector in the metabolize would imply that:$$4\bar{d}_1 + 2\bar{d}_{13} + \bar{d}_{27} = 0.$$
 Since $\bar{d}_{27} = \bar{d}_4$, we rewrite this as $$4\bar{d}_1+  \bar{d}_{4} + 2\bar{d}_{13}  = 0.$$
 
 The group $\Z_{31}^*/\{\pm 1\}$ is cyclic generated by $3$.  The first 15 powers of $3$ are:
 $$\{1, 3, 9, 27, 19, 26, 16, 17, 20, 29, 25, 13, 8, 24, 10\}.$$ Replacing $x$ with $-x \mod 31$ when $x \ge 15$ we have $$\{1, 3, 9, 4, 12, 5, 15, 14, 11, 2, 6, 13, 8, 7, 10\}.$$
 Multiplication by $3$ permutes $ \Z_{31}^*/\{\pm 1\}$, acting as a 15--cycle.  If we identify $\Q[\Z_{15}]$ with $\Q[t]/\left< t^{15} -1\right>$, identifying $t^i$ with $\bar{d}_{3^i}$ we have the relation $$4 + 2 t^3+ t^{11} = 0.$$  The set of all relations between the $\bar{d}_i$ corresponds to an ideal in $\Q[\Z_{15}]$, but the ideal generated $4 + 2t^3 +t^{11}$ is all of $\Q[\Z_{15}]$, since $4 + 2t^3 +t^{11}$ is relatively prime to $t^{15}-1$.  In particular, $t^i$ is in the ideal, so $\bar{d}_{3^i} = 0$ for all $i$.
 
   %%%%%%%SECTION%%%%%%%%%%%%%%
  
%%%%%%%BIBLIOGRAPHY%%%%%%%%%%%%%%

 \vfill \eject

\newcommand{\etalchar}[1]{$^{#1}$}

\end{document}